\newcommand{\R}{\mathbb{R}}
\DeclareMathOperator{\GL}{GL}
\numberwithin{equation}{section}
\theoremstyle{plain} 
\newtheorem{thm}{Theorem}
\newtheorem{cor}[thm]{Corollary}
\newtheorem{lem}[equation]{Lemma}
\newtheorem{theorem}{Theorem}
\newtheorem*{theorem*}{Theorem}
\theoremstyle{remark}
\newtheorem*{rem*}{Remark}
\title{Higher path groupoids and the holonomy of formal power series connections}
\author{Matthew Cellot}
\begin{document}

\begin{abstract}
\noindent Building on ideas of Kohno, we develop a framework for the construction of higher holonomy functors via the transport of formal power series connections.  Using these techniques, we obtain functors from the path groupoid, the path 2-groupoid, and the path 3-groupoid of a manifold. As an application, we construct a Gray functor from the path 3-groupoid of the configuration space of $m$ points in $\mathbb{R}^n$ for~$n\geqslant 4$.
\end{abstract}

\maketitle

\section*{Introduction}

\noindent The purpose of this article is to construct a higher categorical analogue of holonomy functors. Let's first recall the construction of holonomy functors using the notion of parallel transport for a connection. Let~$M$ denote a smooth manifold and consider a connection $\nabla$ on a vector bundle $E\to M$.  Given a smooth curve $\gamma\colon [0,1] \to M$, a section $X$ of $E$ along $\gamma$ is \emph{flat} if $\nabla_{\gamma'(t)}X = 0$. Suppose given an element~$e\in E$ above $\gamma(0)$.  Then the \emph{parallel transport} of $e$ along $\gamma$ is the unique flat section $X$ of $E$ above $\gamma$ such that ~$X_{\gamma(0)} = e$. Therefore, $\nabla$ defines a way of moving elements of the fibers along curves, that is, a linear map $\Theta(\gamma) \colon E_{\gamma(0)}\, \tilde{\to}\, E_{\gamma(1)}$ that sends $e \in E_{\gamma(0)}$ to $X_{\gamma(1)}\in E_{\gamma(1)}$.  In fact,  the points and the smooth paths of~$M$ assemble to form a category $\mathcal{P}_1(M)$ called the \emph{path groupoid} of $M$ (see Section \ref{higherpathgroupoids}), and the parallel transport for $\nabla$ defines a \emph{holonomy functor}
$$
\mathrm{Hol}^1_\nabla \colon \mathcal{P}_1(M) \to \mathrm{Vect}
$$
that sends a point $x\in M$ to the vector space $E_x$, and a smooth path $\gamma$ from $x$ to $y$ to the linear map~$\Theta(\gamma) \colon E_x\, \tilde{\to}\,  E_y$. 

Note that in the particular case of a trivial vector bundle  $E = M\times V$, a connection on $E$ is the same as an $\mathrm{End}(V)$-valued differential 1-form $\omega\in C^1_{dR}(M)\otimes \mathrm{End}(V)$, and the parallel transport along a smooth curve $\gamma\colon [0,1]\to M$ induces the linear map 
$$
\Theta(\gamma) = \mathrm{Id}_V + \int_\gamma \omega + \int_\gamma \omega\omega + \int_\gamma \omega\omega\omega + \cdots \colon V\, \tilde{\to}\, V,
$$
where $\displaystyle \int \omega\ldots\omega$ denotes the iterated integral of $\omega, \ldots, \omega$, as defined by Chen \cite{chen1973} (see Section \ref{iteratedintegrals}). This then defines a holonomy functor
$$
\mathrm{Hol}^1_\omega\colon \mathcal{P}_1(M)\to \GL(V),
$$ 
where $\GL(V)$ is the category with only one object and with morphisms given by the automorphisms of~$V$.

Several very interesting functors arise in this setting. If we denote by $\mathrm{Conf}(m, \mathbb{C})$ the configuration space of $m$ pairwise distinct points in $\mathbb{C}$, then a loop $\gamma\colon [0,1] \to \mathrm{Conf}(m,\mathbb{C})$ can be seen as a pure braid with $m$ strands.  Moreover, the homotopy class of a loop in $\mathrm{Conf}(m,\mathbb{C})$ corresponds to the isotopy class of a pure braid, and we get an isomorphism between the fundamental group of $\mathrm{Conf}(m, \mathbb{C})$ and the pure braid group $P_m$. The Knizhnik-Zamolodchikov connection $\omega_{KZ}$ is a flat connection on a trivial vector bundle~$\mathrm{Conf}(m,\mathbb{C}) \times V \to  \mathrm{Conf}(m,\mathbb{C})$, where the vector space $V$ is obtained by taking the tensor product of~$m$ representations of a simple Lie algebra. The parallel transport of $\omega_{KZ}$ induces representations of $P_m$ and of the braid group $B_m = P_m/\mathfrak{S}_m$, where we take the quotient by the symmetric group~$\mathfrak{S}_m$.  The Drinfeld-Kohno theorem then states that the representations of braid groups induced by the Knizhnik-Zamolodchikov connection correspond to the representations of braid groups resulting from the corresponding quantum group (see details in  \cite[Chapter 5]{ohtsuki2002quantum}). Moreover, these representations induce the Kontsevich integral, a powerful invariant of knots that is universal among Vassiliev invariants (see details in~\cite[Chapter~6]{ohtsuki2002quantum}).

In order to construct higher categorical analogues of holonomy functors, we build on a method due to Kohno \cite{kohno2016higher, kohno2021higher, Koh2022}. We first introduce higher categorical versions of the path groupoid of a smooth manifold $M$: the path 2-groupoid $\mathcal{P}_2(M)$ and the path 3-groupoid $\mathcal{P}_3(M)$.  These structures allow us to consider higher homotopical data of $M$.  Moreover, we associate to a coaugmented differential graded coalgebra $C$ a strict 2-category $\mathcal{C}_2(C)$ and a Gray 3-category $\mathcal{C}_3(C)$ via the cobar complex of~$C$.  Then we show that the transport of a formal power series connection with values in the cobar complex of~$C$ yields functors from the path 2-groupoid and the path 3-groupoid of $M$ to the 2-category $\mathcal{C}_2(C)$ and the Gray 3-category~$\mathcal{C}_3(C)$, respectively:

\begin{theorem}\label{maintheorem}
Let $\omega$ be a formal power series connection on a smooth manifold $M$. The transport of $\omega$ induces higher holonomy functors
\begin{align*}
\mathrm{Hol}_\omega^2 \colon \mathcal{P}_2(M) \to \mathcal{C}_2(C)&& \text{and} &&
\mathrm{Hol}_\omega^3 \colon \mathcal{P}_3(M) \to \mathcal{C}_3(C).
\end{align*}
\end{theorem}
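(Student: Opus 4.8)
The plan is to construct each holonomy functor one dimension at a time---on objects, 1-cells, 2-cells, and, in the three-dimensional case, 3-cells---by parallel transport, and then to verify the functor and coherence axioms. The essential tool throughout is that the cobar complex $\Omega C$ is a differential graded algebra, so a formal power series connection $\omega$ with values in $\Omega C$ may be transported along paths, surfaces, and volumes by means of Chen's iterated integrals. On objects I would send a point to the single object of the relevant higher category, and on 1-cells both $\mathrm{Hol}^2_\omega$ and $\mathrm{Hol}^3_\omega$ would restrict to the functor $\mathrm{Hol}^1_\omega$ of the introduction, sending a path $\gamma$ to the path-ordered series $1 + \int_\gamma \omega + \int_\gamma \omega\omega + \cdots$. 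The flatness (Maurer--Cartan) condition satisfied by $\omega$ is exactly what guarantees that this series is invariant under thin homotopy, so that the assignment descends to the 1-cells of $\mathcal{P}_2(M)$ and $\mathcal{P}_3(M)$; the same principle of thin-homotopy invariance, suitably upgraded, is what I would use to make the higher assignments well defined.

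For the higher cells I would use the two- and three-parameter analogues of the transport. A 2-cell given by a smooth homotopy $\Gamma\colon [0,1]^2 \to M$ between paths $\gamma_0$ and $\gamma_1$ is sent to a 2-morphism $\mathrm{Hol}^2_\omega(\Gamma)\colon \mathrm{Hol}^2_\omega(\gamma_0) \Rightarrow \mathrm{Hol}^2_\omega(\gamma_1)$, obtained by integrating $\omega$ over the square while recording the ordering in both the path and the homotopy directions; concretely this is the component of the transport of $\omega$ living in the graded piece of $\Omega C$ that indexes 2-morphisms of $\mathcal{C}_2(C)$. A 3-cell given by $\Theta\colon [0,1]^3 \to M$ is treated analogously and yields a 3-morphism in $\mathcal{C}_3(C)$. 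The central dictionary I would exploit is that the cobar differential on $\Omega C$ is the algebraic shadow of Stokes' theorem for these integrals: differentiating a $k$-parameter transport in one of its parameters produces terms that, by the Maurer--Cartan equation, reorganise into precisely the boundary contributions prescribed by the source and target of the corresponding cell.

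The bulk of the verification is functoriality. On 1-cells this is the familiar multiplicativity of the path-ordered exponential under concatenation. For 2-cells I would check compatibility with vertical composition (stacking homotopies in the homotopy direction) and with horizontal composition (juxtaposing homotopies along the path direction); each corresponds to a product or convolution of the associated elements of $\Omega C$, and the strict interchange law in $\mathcal{C}_2(C)$ should emerge from Fubini-type reorderings of the resulting double integrals. For 3-cells one must match the several composition operations of $\mathcal{P}_3(M)$ with those of the Gray 3-category $\mathcal{C}_3(C)$ in the same spirit.

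The main obstacle---and the reason a \emph{Gray} rather than a strict 3-category is the natural target---is the interchange of horizontal and vertical composition of 2-cells. In a Gray 3-category this interchange is not an equality but is witnessed by a specified invertible 3-morphism, the Gray interchanger. I expect the heart of the proof to consist in showing that the discrepancy between the two orders of composing a whiskered pair of 2-transports is measured exactly by the 3-dimensional iterated integral attached to this interchanger. Concretely, one analyses the failure of Fubini to hold at second order---the non-commutativity of the two integration directions over the cube $[0,1]^3$---and identifies the correction term, through the Maurer--Cartan equation and the cobar differential, with the interchange 3-cell of $\mathcal{C}_3(C)$. Establishing this matching, along with the coherence axioms that the Gray interchanger must satisfy, is where the most delicate iterated-integral computations will be concentrated.
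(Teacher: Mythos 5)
Your overall skeleton (define the holonomy on cells by integrating the transport $T_\omega$, then verify compatibility with the various compositions) is the same as the paper's, but two of your mechanisms are misattributed and one essential well-definedness device is missing, and without it the construction as you describe it fails. First, thin (rank-1) homotopy invariance of the degree-0 transport does \emph{not} come from the flatness/Maurer--Cartan condition; it is a pure rank argument (pullbacks of forms along a map whose differential has rank $<2$ vanish). The twisted cochain condition $d_\Omega\omega + d\omega = \varepsilon(\omega)\wedge\omega$ is used elsewhere: together with Stokes' theorem it converts $\langle T_\omega,\partial H\rangle$ into $d_\Omega\langle T_\omega,H\rangle$, which shows that the integral over a 2-cell (resp.\ 3-cell) is invariant under rank-2 (resp.\ rank-3) homotopy only \emph{modulo} $d_\Omega$-exact terms. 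This is precisely why the paper defines the 2- and 3-morphism spaces of the targets as quotients, $\widehat{\Omega^1(C)}/d_\Omega\bigl(\widehat{\Omega^1(C)}\otimes\widehat{\Omega^1(C)}\bigr)$ and $\widehat{\Omega^2(C)}/d_\Omega\widehat{\Omega^{1,2}(C)}$. You send a 2-cell to ``the component of the transport in the appropriate graded piece of $\Omega C$,'' but without passing to these quotients that assignment is not well defined on 2-tracks: two rank-2 homotopic squares give integrals that differ by an exact term, not equal ones.

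Second, your composition dictionary is off, and this hides where the real difficulty sits for $\mathrm{Hol}^3_\omega$. In the target, vertical composition of 2-morphisms is \emph{addition}, not a product or convolution, and compatibility is just additivity of the integral over stacked domains; horizontal composition is the whiskered sum $Mp+nN=mN+Mq$, which follows from Chen's product formula for iterated integrals on concatenated paths --- no Fubini reordering enters. Most seriously: since on-the-nose invariance of $\langle T_\omega,g\rangle$ fails for general rank-2 homotopies, the 2-cells of the path 3-groupoid must be taken up to the finer \emph{laminated} rank-2 homotopy, for which strict invariance does hold (again by a rank estimate). But horizontal composition of squares does not descend to laminated classes, so $\mathcal{P}_3(M)$ carries two whiskered ``matrix'' compositions in place of a single horizontal composition, and the Gray interchanger is exactly the 3-cell comparing them; under the holonomy its value is read off from the difference of the two horizontal-composition formulas, namely a term of the form $\langle T_\omega,h_1\rangle\langle T_\omega,h_2\rangle\in\widehat{\Omega^2(C)}$. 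Your plan locates the heart of the proof in a delicate second-order failure-of-Fubini computation over the cube, whereas in the actual construction the composition identities are soft consequences of Chen's formulas; the genuine work is in the well-definedness lemmas (Stokes' theorem plus the twisted cochain condition plus rank arguments) and in setting up the source groupoid with laminated classes and the target with the correct quotients so that those lemmas suffice.
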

\noindent Theorem \ref{maintheorem} corresponds to Theorems \ref{thm2holonomy} and \ref{thm3holonomy} in Section \ref{higherholonomyfunctors}.

Next we apply our construction to a formal power series connection introduced by Komendarczyk, Koytcheff and Voli\'c \cite{KKV} that generalizes the Knizhnik-Zamolodchikov connection.  This yields a functor from the path 3-groupoid of the configuration space of $m$ points in $\R^n$ for $n\geqslant 4$ (see remark in Section \ref{application}).

For $k\geqslant 4$, we conjecture the existence  of holonomy $k$-functors $\mathrm{Hol}_\omega^k $ from the path $k$-groupoid $\mathcal{P}_k(M)$ of a manifold $M$ to some target $k$-category  $\mathcal{C}_k(C)$.  Nevertheless, a precise definition of $\mathcal{P}_k(M)$ remains to be written.

This work is motivated by the study of higher order versions of the Knizhnik-Kamolod\-chikov equation and the derived representations of braid groups.  Generalizations of these types of techniques can be seen in the work of Cohen and Gitler \cite{cohen2002loop} who give detailed descriptions of the homology of loop spaces of configuration spaces.  Moreover,  many related results help to better understand higher path groupoids.  We refer for instance to work by Baez and Schreiber \cite{baez2004higher} on higher gauge theory and~2-connections, as well as work by Faria Martins and Picken \cite{martins2010two, faria_martins_fundamental_2011} on higher dimensional holonomy. For further work on higher holonomy functors, see Kohno \cite{kohno2016higher, kohno2020higher, kohno2021higher}, Schreiber and Waldorf~\cite{schreiber_parallel_2007, schreiber_smooth_2008}. Moreover, we expect that higher holonomy functors may be useful in the study of braided surfaces in~4-space, as studied by Kamada \cite{kamada_braid_2002}, Carter and Saito \cite{carter_knotted_1998}, as well as the study of loop braid groups, as presented by Damiani \cite{damiani_journey_2017}.

The paper is organized as follows. In Section \ref{higherpathgroupoids}, we recall the definition of the path groupoid, the path~2-groupoid, and the path 3-groupoid of a smooth manifold.  In Section \ref{iteratedintegrals}, we recall the notion of iterated integral due to Chen \cite{chen1973}.  In Section \ref{formalpowerseriesconnections}, we define the notion of formal power series connection (compare to~\cite{chen1973, chen1977}). In Section \ref{higherholonomyfunctors}, we construct the holonomy functor, the holonomy~2-functor, and the holonomy~3-functor of a formal power series connection, and we prove Theorem \ref{maintheorem}. In Section~\ref{application}, we describe the holonomy 3-functor of a formal power series connection defined by Komendarczyk, Koytcheff and Voli\'c \cite{KKV} on the configuration space of $m$ points in $\R^n$ for $n\geqslant 4$.

Throughout this work, all vector spaces, algebras and coalgebras will be over the field $\R$. 

\section*{Acknowledgements}

\noindent I am extremely grateful to Toshitake Kohno for his guidance during my stay at the University of Tokyo, and for helping me discover the world of higher holonomy functors.

\section{Higher path groupoids}\label{higherpathgroupoids}

\noindent In this section, we follow the presentation given by Faria Martins and Picken \cite{faria_martins_fundamental_2011} of the path groupoid, the path 2-groupoid, and the path 3-groupoid of a smooth manifold. Similar definitions can be found in \cite{schreiber_parallel_2007, schreiber_smooth_2008, kohno2016higher,kohno2020higher,kohno2021higher, martins2010two}.

Throughout this section, we suppose $M$ is a smooth manifold (possibly with boundary).  

\subsection{Higher order paths}\label{n-paths}
An \emph{$n$-path}, for an integer $n\geqslant 0$, is a smooth map $\alpha\colon [0,1]^n\to M$ for which there exists an $\varepsilon > 0$ such that for all $i\in \{1, \ldots n\}$,
\begin{align*}
0\leqslant t_i \leqslant \varepsilon & \Rightarrow \alpha(t_1, \ldots , t_i, \ldots , t_n) = \alpha(t_1, \ldots,  0, \ldots , t_n),\\
1-\varepsilon \leqslant t_i \leqslant 1 &\Rightarrow \alpha(t_1, \ldots , t_i, \ldots, t_n) = \alpha(t_1, \ldots , 1, \ldots t_n), 
\end{align*}
and $\alpha([0,1]^{n-1}\times \{0\})$ and $\alpha([0,1]^{n-1}\times \{1\})$ both consist of a single point. In particular, a $0$-path is just a point.

An \emph{$n$-path from $f$ to $g$}, where $f$ and $g$ are $(n-1)$-paths, is an $n$-path $\alpha\colon [0,1]^{n}\to M$ such that
\begin{align*}
\alpha(0,-,\cdots, -) = f && \text{ and } &&
\alpha(1, -, \cdots , -) = g,
\end{align*}
where $\alpha(x,-,\cdots,-)$, for $x\in [0,1]$, denotes the $(n-1)$-path defined by
$$
\alpha(x,-,\cdots,-)(t_1, \ldots, t_{n-1}) = \alpha(x, t_1, \ldots, t_{n-1}),
$$
for all $(t_1, \ldots, t_{n-1})\in [0,1]^{n-1}$.

The \emph{composition of two 1-paths} $\gamma_1$ and $\gamma_2$ such that $\gamma_1(1) = \gamma_2(0)$ is the 1-path $\gamma_1 \circ \gamma_2$ defined by:
\begin{equation*}
\gamma_1\circ  \gamma_2(t) = 
\begin{cases}
\gamma_1(2t) & \text{if} \ 0 \leqslant t \leqslant \frac{1}{2},\\
\gamma_2(2t-1) & \text{if}\  \frac{1}{2}\leqslant t \leqslant 1.
\end{cases}
\end{equation*}

\subsection{Rank-1 homotopies}
A \emph{rank-1 homotopy} is a 2-path $h\colon [0,1]^2\to M$ such that for all $(s,t) \in [0,1]^2$,  the rank of its differential $d_{(s, t)}h$ is less than 2.  Two 1-paths $\gamma$ and $\gamma'$ are \emph{rank-1 homotopic} if there exists a rank-1 homotopy from $\gamma$ to $\gamma'$.

A \emph{$1$-track} $[\gamma]$ is the rank-1 homotopy class of a 1-path $\gamma$.  The \emph{composition $[\gamma_1]\circ [\gamma_2]$ of two 1-tracks}~$[\gamma_1]$ and $[\gamma_2]$ such that $\gamma_1(1) = \gamma_2(0)$ is the 1-track $[\gamma_1\circ \gamma_2]$.  Notice that the composition of two 1-tracks doesn't depend on the choice of representative 1-paths and that it is associative. Moreover, if we denote by $c_{x}$ the constant~1-path at a point $x\in M$, then for any 1-path $\gamma$,
\begin{align*}
[\gamma] \circ [c_{\gamma(1)}] = [\gamma] && \text{and} && [c_{\gamma(0)}] \circ [\gamma] = [\gamma].
\end{align*}

\subsection{Path groupoids}
The \emph{path groupoid} of $M$ is the category $\mathcal{P}_1(M)$ whose objects are given by the points of $M$ and whose morphisms are 1-tracks, with composition induced by the composition of 1-tracks.  Notice that every 1-track is invertible. 

\subsection{Vertical composition of 2-paths}\label{vertical composition}Let $g$ and $h$ be 2-paths such that $g(1, -) = h(0,-)$. The \emph{vertical composition} of $g$ and $h$ is the 2-path $g\cdot h$ defined by:
\begin{equation*}
(g\cdot h)(s,t) = 
\begin{cases}
g(2s,t) &\text{if}\ 0\leqslant s \leqslant \frac{1}{2} \ \text{and} \ 0\leqslant t \leqslant 1,\\
h(2s-1,t) &\text{if}\ \frac{1}{2}\leqslant s \leqslant 1 \ \text{and} \ 0\leqslant t \leqslant 1.
\end{cases}
\end{equation*}

\subsection{Horizontal composition of 2-paths}

Let $h_1$ and $h_2$ be 2-paths such that $$h_1([0,1]\times \{1\}) = h_2([0,1]\times \{0\}).$$ The \emph{horizontal composition} of $h_1$ and $h_2$ is the 2-path $h_1\circ h_2$ defined by:
\begin{equation*}
(h_1\circ h_2) (s,t) = 
\begin{cases}
h_1(s,2t) &\text{if}\ 0\leqslant s \leqslant 1 \ \text{and} \ 0\leqslant t \leqslant \frac{1}{2},\\
h_2(s,2t-1) &\text{if}\ 0\leqslant s \leqslant 1 \ \text{and} \ \frac{1}{2}\leqslant t \leqslant 1.
\end{cases}
\end{equation*}

\subsection{Rank-2 homotopies} A \emph{rank-2 homotopy} is a 3-path $\alpha\colon [0,1]^3\to M$ such that:
\begin{itemize}
\item for all $(r,s,t) \in [0,1]^3$, the rank of its differential $d_{(r,s,t)}\alpha$ is less than 3,
\item $\alpha(-,0,-)$ and $\alpha(-,1,-)$ are rank-1 homotopies.
\end{itemize}
Two 2-paths $g$ and $h$ are \emph{rank-2 homotopic} if there exists a rank-2 homotopy from $g$ to $h$.

A \emph{2-track} $[g]$ is the rank-2 homotopy class of a 2-path $g$. The \emph{vertical composition of~2-tracks} is induced by the vertical composition of 2-paths, and the \emph{horizontal composition of 2-tracks} is induced by the horizontal composition of 2-paths. Notice that these compositions are associative and satisfy the interchange law. 

\subsection{Path 2-groupoids} The \emph{path 2-groupoid of $M$} is the strict 2-category $\mathcal{P}_2 (M)$ defined as follows.  The objects of $\mathcal{P}_2 (M)$ are the points of $M$. The 1-morphisms of $\mathcal{P}_2 (M)$ are given by 1-tracks, with composition induced by the composition of 1-tracks.  A 2-morphism in $\mathcal{P}_2 (M)$ from $[\gamma]$ to $[\gamma']$ is given by a 2-track $[g]$ such that $[g(0, -,-)] = [\gamma]$ and $[g(1, -,-)] = [\gamma']$.  The identity of a 1-morphism $[\gamma]$ is given by the rank-2 homotopy class of the constant 2-path at $\gamma$.  The vertical and the horizontal composition of 2-morphisms are induced by the vertical and the horizontal composition of~2-paths, respectively. We refer the reader to \cite{martins2010two} for the details on the vertical and the horizontal composition of 2-morphisms. Notice that all 2-tracks are invertible.

\subsection{Laminated rank-2 homotopies}
We recall a stronger notion of rank-2 homotopy due to Faria Martins and Picken \cite{faria_martins_fundamental_2011}. A \emph{laminated rank-2 homotopy} is a rank-2 homotopy $\alpha$ such that for all~$(r,s) \in [0,1]^2$ at least one of the following conditions holds:
\begin{itemize}
\item for all $t\in [0,1]$, there exist constants $(a_t,b_t)\neq (0,0)$ such that 
$$
a_t\frac{\partial}{\partial r}\alpha(r,s,t) + b_t\frac{\partial}{\partial t}\alpha(r,s,t) = 0,
$$
\item for all $t\in [0,1]$, there exist constants $(a_t,b_t)\neq (0,0)$ such that 
$$
a_t\frac{\partial}{\partial s}\alpha(r,s,t) + b_t\frac{\partial}{\partial t}\alpha(r,s,t) = 0,
$$
\item there exist constants $(a,b) \neq (0,0)$ such that, for all $t\in [0,1]$,
\begin{equation*}
a\frac{\partial}{\partial r}\alpha(r,s,t) + b\frac{\partial}{\partial s}\alpha(r,s,t) = 0.
\end{equation*}
\end{itemize}
A \emph{laminated 2-track} $[g]$ is the laminated rank-2 homotopy class of a 2-path $g$. 

\subsection{Whiskering of 2-paths by 1-paths}
Let $\gamma$ be a 1-path and $g$ be a 2-path such that $$\gamma(1) = g([0,1]\times \{0\}).$$
The \emph{left whiskering of $g$ with $\gamma$} is the 2-path $\gamma\circ g$ given by:
\begin{equation*}
\gamma\circ g (s,t) = 
\begin{cases}
\gamma(2t) & \text{if } 0\leqslant t \leqslant \frac{1}{2} \text{ and } 0\leqslant s \leqslant 1,\\
g(2t-1, s) & \text{if } \frac{1}{2}\leqslant t \leqslant 1 \text{ and } 0\leqslant s \leqslant 1.
\end{cases}
\end{equation*}

Let $\gamma$ be a 1-path and $g$ be a 2-path such that $$g([0,1]\times \{1\}) = \gamma(0).$$
The \emph{right whiskering of $g$ with $\gamma$} is the 2-path $g\circ \gamma$ given by:
\begin{equation*}
g\circ \gamma (s,t) = 
\begin{cases}
g(2t, s) & \text{if } 0\leqslant t \leqslant \frac{1}{2} \text{ and } 0\leqslant s \leqslant 1,\\
\gamma(2t-1) & \text{if } \frac{1}{2}\leqslant t \leqslant 1 \text{ and } 0\leqslant s \leqslant 1.
\end{cases}
\end{equation*}
The \emph{left (respectively right) whiskering of a laminated 2-track with a 1-track} is the laminated rank-2 homotopy class of the left (respectively right) whiskering of a representative 2-path with a representative~1-path. Notice that this operation doesn't depend on the choice of representatives.

\subsection{Horizontal compositions of laminated 2-tracks}

The horizontal composition of 2-paths does not descend to the quotient by laminated rank-2 homotopy.  Therefore, we use whiskering to define two more notions of horizontal composition. 

Let $h_1$ and $h_2$ be 2-paths such that $$h_1([0,1]\times \{1\}) = h_2([0,1]\times \{0\}).$$ 
We denote by 
$\begin{pmatrix}
& h_2\\
h_1 & 
\end{pmatrix}$ the 2-path defined by $\begin{pmatrix}
& h_2\\
h_1 & 
\end{pmatrix} = (h_1\circ h_2(0,-))\cdot (h_1(1,-) \circ h_2)$. More explicitly:
\begin{equation*}
\begin{pmatrix}
& h_2\\
h_1 & 
\end{pmatrix} (s,t) = 
\begin{cases}
h_1(2s, 2t) & \text{if } 0\leqslant s \leqslant \frac{1}{2} \text{ and } 0\leqslant t \leqslant \frac{1}{2},\\
h_1(1, 2t) & \text{if } \frac{1}{2}\leqslant s \leqslant 1 \text{ and } 0\leqslant t \leqslant \frac{1}{2},\\
h_2(0, 2t-1) & \text{if } 0 \leqslant s \leqslant \frac{1}{2} \text{ and } \frac{1}{2}\leqslant t \leqslant 1,\\
h_2(2s-1, 2t-1) & \text{if } \frac{1}{2} \leqslant s \leqslant 1 \text{ and } \frac{1}{2}\leqslant t \leqslant 1.\\
\end{cases}
\end{equation*}
We denote by $\begin{pmatrix}
h_1 & \\
& h_2
\end{pmatrix}$ the 2-path defined by $\begin{pmatrix}
h_1 & \\
& h_2
\end{pmatrix} = (h_1(0,-)\circ h_2)\cdot (h_1\circ h_2(1,-))$. More explicitly:
\begin{equation*}
\begin{pmatrix}
h_1& \\
 & h_2
\end{pmatrix} (s,t) = 
\begin{cases}
h_1(0, 2t) & \text{if } 0\leqslant s \leqslant \frac{1}{2} \text{ and } 0\leqslant t \leqslant \frac{1}{2},\\
h_1(2s-1, 2t) & \text{if } \frac{1}{2}\leqslant s \leqslant 1 \text{ and } 0\leqslant t \leqslant \frac{1}{2},\\
h_2(2s, 2t-1) & \text{if } 0 \leqslant s \leqslant \frac{1}{2} \text{ and } \frac{1}{2}\leqslant t \leqslant 1,\\
h_2(1, 2t-1) & \text{if } \frac{1}{2} \leqslant s \leqslant 1 \text{ and } \frac{1}{2}\leqslant t \leqslant 1.\\
\end{cases}
\end{equation*}
These compositions induce compositions of laminated 2-tracks denoted by $\begin{pmatrix}
& [h_2]\\
[h_1] & 
\end{pmatrix}$ and
$\begin{pmatrix}
[h_1] & \\
& [h_2]
\end{pmatrix}$, respectively. 

\subsection{Upward composition of 3-paths}
Let $J$ and $K$ be 3-paths such that 
$$J(1,-,-) = K(0,-,-).$$  The \emph{upward composition} of $J$ and $K$ is the 3-path defined by:
\begin{equation*}
J*K (r,s,t) = 
\begin{cases}
J(2r,s,t) &\text{if } 0\leqslant r \leqslant \frac{1}{2}, \ 0\leqslant s \leqslant 1 \text{ and }0\leqslant t \leqslant 1,\\
K(2r-1, s,t) & \text{if }\frac{1}{2}\leqslant r \leqslant 1, \ 0\leqslant s \leqslant 1 \text{ and }0\leqslant t \leqslant 1.
\end{cases}
\end{equation*}

\subsection{Vertical composition of 3-paths}
Let $J$ and $K$ be 3-paths such that 
$$J(-,1,-) = K(-,0,-).$$ 
The \emph{vertical composition} of $J$ and $K$ is the 3-path defined by:
\begin{equation*}
J\cdot K (r,s,t) = 
\begin{cases}
J(r,2s,t) &\text{if } 0\leqslant r \leqslant 1, \ 0\leqslant s \leqslant \frac{1}{2} \text{ and }0\leqslant t \leqslant 1,\\
K(r, 2s-1,t) & \text{if }0\leqslant r \leqslant 1, \ \frac{1}{2}\leqslant s \leqslant 1 \text{ and }0\leqslant t \leqslant 1.
\end{cases}
\end{equation*}

\subsection{Rank-3 homotopies}
A \emph{good 3-path} is a 3-path $J\colon [0,1]^3 \to M$ such that 
\begin{itemize}
\item  for all $r, r'\in [0,1]$ and $t\in [0,1]$, $J(r,0,t) = J(r', 0,t)$,
\item for all $r, r'\in [0,1]$ and $t\in [0,1]$, $J(r,1,t) = J(r', 1,t)$.
\end{itemize} 
A \emph{rank-3 homotopy} is a 4-path $W \colon [0,1]^4\to M$ such that
\begin{itemize}
\item for all $(q,r,s,t) \in [0,1]^n$, the rank of $d_{(q,r,s,t)}W$ is less than 4,
\item the restrictions $W(-,0,-,-)$ and $W(-,1,-,-)$ are laminated rank-2 homotopies,
\item the restrictions $W(-,r,0,-)$ and $W(-,r,1,-)$ are independent of $r\in [0,1]$ and define rank-1 homotopies.
\end{itemize}
Two good 3-paths $J$ and $J'$ are \emph{rank-3 homotopic} if there exists a rank-3 homotopy from $J$ to $J'$.
A~\emph{3-track}~$[J]$ is the rank-3 homotopy class of a good 3-path $J$.

\subsection{Whiskering of 3-tracks by 1-tracks} Let $\gamma$ be a 1-path and let $J$ be a 3-path such that $$\gamma(1) = J([0,1]^2\times \{0\}).$$ The \emph{left whiskering of $J$ with $\gamma$} is the 3-path $\gamma\circ J$ defined by:
\begin{equation*}
\gamma\circ J (r,s,t) = 
\begin{cases}
\gamma(2t) & \text{if } 0\leqslant r \leqslant 1, \ 0\leqslant s\leqslant 1 \text{ and } 0\leqslant t \leqslant \frac{1}{2},\\
J(r,s,2t-1) & \text{if }0\leqslant r \leqslant 1, \ 0\leqslant s\leqslant 1 \text{ and } \frac{1}{2}\leqslant t \leqslant 1.
\end{cases}
\end{equation*}

Let $\gamma$ be a 1-path and let $J$ be a 3-path such that $$J([0,1]^2\times \{1\}) = \gamma(0).$$
The \emph{right whiskering of $J$ with $\gamma$} is the 3-path $J\circ \gamma$ defined by:
\begin{equation*}
J\circ \gamma (r,s,t) = 
\begin{cases}
J(r,s,2t) & \text{if } 0\leqslant r \leqslant 1, \ 0\leqslant s\leqslant 1 \text{ and } 0\leqslant t \leqslant \frac{1}{2},\\
\gamma(2t-1) & \text{if }0\leqslant r \leqslant 1, \ 0\leqslant s\leqslant 1 \text{ and } \frac{1}{2}\leqslant t \leqslant 1.
\end{cases}
\end{equation*}

The \emph{left (respectively right) whiskering of a 3-track with a 1-track} is the rank-3 homotopy class of the left (respectively right) whiskering of a representative good 3-path with a representative 1-path. Notice that this operation doesn't depend on the choice of representatives.

\subsection{Path 3-groupoids}

The \emph{path 3-groupoid of $M$} (or the \emph{fundamental Gray 3-groupoid of $M$}) is the Gray 3-groupoid $\mathcal{P}_3(M)$ defined as follows. The objects of $\mathcal{P}_3(M)$ are the points of $M$.  The 1-morphisms of $\mathcal{P}_3(M)$ are given by 1-tracks, with composition induced by the composition of 1-tracks.  A~2-morphism in $\mathcal{P}_3(M)$ from $[\gamma]$ to $[\gamma']$ is given by a laminated 2-track $[g]$ such that $[g(0,-,-)] = [\gamma]$ \break and $[g(1,-,-)] = [\gamma']$. The vertical composition and the horizontal compositions of 2-morphisms are induced by the vertical composition and the horizontal compositions of 2-paths, respectively. A~3-mor\-phism in $\mathcal{P}_3(M)$ from $[g]$ to $[h]$ is given by a 3-track $[J]$ such that $[J(0,-,-)] = [g]$ and $[J(1,-,-)] = [h]$. The upward composition, the vertical composition and the horizontal compositions of 3-morphisms are induced by the upward composition, the vertical composition and the horizontal compositions of 3-paths, respectively.  We refer the reader to \cite{faria_martins_fundamental_2011} for more details on $\mathcal{P}_3(M)$ and for the proof that it is a Gray 3-groupoid. 

\section{Iterated integrals}\label{iteratedintegrals}
\noindent In this section, we recall the notion of differential form on a differentiable space,  and we describe a certain class of differential forms on path spaces called iterated integrals.  We refer the reader to \cite{chen1973, chen1977} for more details.

\subsection{Differentiable spaces and differentiable maps}
A \emph{convex $n$-region} (or simply a \emph{convex region}) is a closed convex subset of~$\R^n$, for some $n \geqslant 0$.  A map $f$ from a convex $n$-region $U$ to a convex region~$V$ is \emph{smooth} if there exists a smooth extension of $f$ to an open subset of $\mathbb{R}^n$.  A \emph{differentiable space} is a Hausdorff space $X$ equipped with a family of maps, called \emph{plots}, satisfying the following conditions:
\begin{itemize}
\item every plot is a continuous map $\phi : U \to X$, where $U$ is a convex region,
\item if $\phi : U \to X$ is a plot,  $U'$ is a convex region, and $f : U' \to U$ is a smooth map, then $\phi \circ f$ is a plot,
\item every map $\{0\} \to X$ is a plot.
\end{itemize}
We say that a plot $\psi$ with domain $V$ \emph{goes through} another plot $\phi$ with domain $U$ via $f$ if there exists a smooth map $f : V \to U$ such that $\psi = \phi\circ f$. Let $X$ and $X'$ be differentiable spaces. A \emph{differentiable map} from $X$ to $X'$ is a continuous map $f : X \to X'$ such that, for every plot $\phi$ of $X$, $f\circ \phi$ is a plot of $X'$.

The notion of differentiable space is weaker than that of smooth manifold and will help us avoid certain limitations associated to smooth manifolds.  In particular, smooth manifolds are differentiable spaces with plots given by smooth maps $\phi : U \to M$ where $U$ is a convex region.

\subsection{Differential forms on differentiable spaces}
For any $p\geqslant 0$, a \emph{differential $p$-form} on a differentiable space $X$ is a rule that assigns to each plot $\phi : U \to X $ a differential $p$-form $\omega_\phi \in C_{dR}^p(U)$ such that, if $\psi$ is a plot that goes through $\phi$ via $f$, then:
\begin{equation*}
f^*\omega_\psi = \omega_\phi.
\end{equation*}
A \emph{differential form} on $X$ is a differential $p$-form for some $p$ called its \emph{degree}. The set of differential $p$-forms on a differentiable space $X$ is denoted by $C_{dR}^p(X)$. The differential forms on a differentiable space $X$ define a commutative differential graded algebra $C_{dR}^*(X) = \bigoplus_{p\geqslant 0}C_{dR}^p(X)$ whose sum, exterior product and exterior derivative are given by:
\begin{align*}
(\omega + \omega')_\phi &= \omega_\phi + \omega'_\phi,\\
(\omega \wedge \omega')_\phi &= \omega_\phi \wedge \omega_\phi,\\
(d\omega)_\phi &= d\omega_\phi,
\end{align*}
for any plot $\phi\colon U \to X$. 

\subsection{Pointed path spaces}
The \emph{pointed path space} of a smooth manifold $M$ is the set $P(M;x_0,x_1)$ of~1-paths in $M$ from~$x_0$ to $x_1$ (see Section \ref{n-paths}) with the compact-open topology, where $x_0$ and $x_1$ are points of~$M$.  The pointed path space of $M$ is a differentiable space when equipped with the following plots. A plot of~$P(M;x_0,x_1)$ is a continuous map $\phi : U \to P(M;x_0,x_1)$, such that the adjoint map~$\widetilde{\phi} : U \times [0,1] \to M$ is smooth.

\subsection{Iterated integrals on paths spaces}

Let $\omega_1, \ldots, \omega_k$ be differential forms of positive degree on a smooth manifold $M$.  Let $\phi : U \to P(M;x_0,x_1)$ be a plot of $P(M;x_0,x_1)$,  where $x_0$ and $x_1$ are points of~$M$,  and let $\Delta^k$ denote the $k$-simplex given by:
\begin{equation*}
\Delta^k = \{(t_1, \ldots, t_k)\in \mathbb{R}^k \mid 0\leqslant t_1\leqslant \cdots \leqslant t_k\leqslant 1\}.
\end{equation*}
The plot $\phi$ induces a smooth map $\widetilde{\phi} :  \Delta^k\times U \to M^k$ defined by:
\begin{equation*}
\widetilde{\phi}(t_1, \ldots, t_k, x) = \left(\phi(x)(t_1), \ldots,\phi(x)(t_k)\right),
\end{equation*}
for all $((t_1, \ldots, t_k),x)\in \Delta^k\times U$.

We denote by $\pi_i : M^k \to M$ the projection onto the $i$-th factor for any smooth manifold $M$. The \emph{iterated integral} of differential forms $\omega_1, \ldots, \omega_k$ of positive degree on $M$ is the diffe\-rential form~$\displaystyle \int \omega_1 \ldots \omega_k$ on the differen\-tiable space $P(M;x_0,x_1)$ defined by:
\begin{equation*}
\left(\int \omega_1 \ldots \omega_k\right)_{\!\phi} = \int_{\Delta^k}\widetilde{\phi}^*(\pi_1^*\omega_1\wedge \cdots \wedge \pi_k^*\omega_k),
\end{equation*}
for any plot $\phi : U \to P(M;x_0,x_1)$, where $\displaystyle \int_{\Delta^k}$ denotes integration along the fiber of $\Delta^k\times U \to U$. Linear combinations of iterated integrals will also be called iterated integrals.  The iterated integral $$\int \omega_1 \ldots \omega_k$$ is a differential form of degree $\deg(\omega_1) + \cdots + \deg(\omega_k) - k$, where $\mathrm{deg}(\omega_i)$ denotes the degree of $\omega_i$ for \break all $i \in \{1,\ldots , k\}$.

We recall a useful formula:
\begin{lem}[see \cite{kohno2016higher}]\label{lem21}
Let $\omega_1, \ldots \omega_k$ be differential forms of positive degree on a smooth mani\-fold~$M$. On the pointed path space~$P(M; x_0, x_1)$, we have:
\begin{align*}
d \int \omega_1\ldots \omega_k = &\sum\limits_{i=1}^k (-1)^{\nu_{i-1} +1}\int \omega_1\ldots \omega_{i-1}\, d\omega_i\, \omega_{i+1} \ldots \omega_k \\
&+ \sum\limits_{i=1}^{k-1}(-1)^{\nu_i +1}\int \omega_1\ldots \omega_{i-1}(\omega_i\wedge \omega_{i+1}) \omega_{i+2}\ldots \omega_k ,
\end{align*}
where $\nu_i = \deg(\omega_1) +\cdots + \deg(\omega_i) - i$.
\end{lem}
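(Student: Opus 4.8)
The plan is to compute the exterior derivative $d\int\omega_1\ldots\omega_k$ directly from the definition of the iterated integral, reducing everything to an application of Stokes' theorem on the fiber. Fix a plot $\phi\colon U\to P(M;x_0,x_1)$ with adjoint $\widetilde\phi\colon\Delta^k\times U\to M^k$. By definition,
\begin{equation*}
\left(\int\omega_1\ldots\omega_k\right)_{\!\phi}=\int_{\Delta^k}\widetilde\phi^*\,\eta,\qquad \eta:=\pi_1^*\omega_1\wedge\cdots\wedge\pi_k^*\omega_k .
\end{equation*}
The key tool is the fiber-integration (pushforward) version of Stokes' theorem for the projection $p\colon\Delta^k\times U\to U$ whose fiber is the compact manifold-with-corners $\Delta^k$. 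The general formula reads $d\int_{\Delta^k}\alpha=\int_{\Delta^k}d\alpha\pm\int_{\partial\Delta^k}\alpha$, where the sign is governed by the fiber dimension $k$ and the boundary integral is fiber integration over $\partial\Delta^k$. So I would first establish this commutation rule (or cite it) and then analyze the two resulting terms applied to $\alpha=\widetilde\phi^*\eta$.

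For the interior term $\int_{\Delta^k}d\widetilde\phi^*\eta=\int_{\Delta^k}\widetilde\phi^*d\eta$, I would expand $d\eta$ using the graded Leibniz rule across the wedge $\pi_1^*\omega_1\wedge\cdots\wedge\pi_k^*\omega_k$. Differentiating the $i$-th factor produces $\pi_i^*d\omega_i$ preceded by the sign $(-1)^{\deg\omega_1+\cdots+\deg\omega_{i-1}}$; bookkeeping this sign against the fiber-integration sign convention should yield exactly the factor $(-1)^{\nu_{i-1}+1}$ and recover each summand $\int\omega_1\ldots d\omega_i\ldots\omega_k$ in the first sum, since $\nu_{i-1}=\deg\omega_1+\cdots+\deg\omega_{i-1}-(i-1)$. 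The main bookkeeping obstacle lies here and in the boundary term: I expect the sign verification, tracking the interaction between the de Rham degrees, the simplex dimension, and the orientation conventions for fiber integration, to be the genuinely delicate part rather than the geometric content.

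For the boundary term I would decompose $\partial\Delta^k$ into its codimension-one faces. The faces $t_i=t_{i+1}$ force $\widetilde\phi$ to send two adjacent coordinates to the same point of $M$, so on such a face $\pi_i^*\omega_i\wedge\pi_{i+1}^*\omega_{i+1}$ pulls back to the single form $\omega_i\wedge\omega_{i+1}$ evaluated at the merged point; integrating over the remaining simplex coordinates produces precisely $\int\omega_1\ldots(\omega_i\wedge\omega_{i+1})\ldots\omega_k$, giving the second sum with sign $(-1)^{\nu_i+1}$. The extreme faces $t_1=0$ and $t_k=1$ contribute nothing: here $\widetilde\phi$ lands on $\phi(x)(0)=x_0$ or $\phi(x)(1)=x_1$, constant in $x$, so the pullback of the positive-degree form $\omega_1$ (respectively $\omega_k$) vanishes. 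I would close the argument by matching the accumulated signs from both contributions against the stated $\nu_i$-dependent signs, which completes the identity; this final reconciliation of conventions is where I would spend the most care.
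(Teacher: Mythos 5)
The paper itself gives no proof of this lemma---it is quoted from Kohno \cite{kohno2016higher} (and ultimately goes back to Chen)---and your proposal is exactly the standard argument from that source: Stokes' theorem for fiber integration over $\Delta^k$, with the interior term producing the $d\omega_i$ sum via the graded Leibniz rule, the faces $t_i=t_{i+1}$ producing the $\omega_i\wedge\omega_{i+1}$ sum, and the extreme faces $t_1=0$, $t_k=1$ vanishing because the paths are pointed at $x_0$ and $x_1$. The structure and all key steps are correct; the only part left unexecuted is the sign bookkeeping yielding $(-1)^{\nu_{i-1}+1}$ and $(-1)^{\nu_i+1}$, which you correctly identify as the delicate (but purely mechanical) part.
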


\subsection{The bar complex}\label{barcomplex}

Let $M$ be a smooth manifold and let $x_0$ and $x_1$ be points of $M$.  The \emph{bar complex of ite\-rated integrals on $M$} (or simply the \emph{bar complex on $M$}) is the sub\-complex~$B^*(M)$ of the complex~$C^*_{dR}(P(M; x_0,x_1))$ of differential forms on the pointed path space of $M$ such that for all $q\geqslant 0$, the vector space $B^q(M)$ is the sum of the images of the multilinear maps 
$$
\int \colon C_{dR}^{p_1}(M)\otimes \cdots \otimes C_{dR}^{p_k}(M) \to C_{dR}^q(P(M; x_0,x_1)) 
$$
defined by the iterated integrals, where the sum is taken over all positive integers $k, p_1, \ldots, p_k$ such that~$p_1+ \cdots + p_k - k = q$.  As a consequence of Lemma \ref{lem21}, we have that $d \left(B^q(M)\right) \subset B^{q+1}(M)$. 

A nonzero element of $B^*(M)$ is \emph{homogeneous of length $k$}, for some $k\geqslant 0$,  if it is the sum of iterated integrals of the form:
$$ \int \omega_1 \cdots \omega_k. $$


\subsection{The integration of iterated integrals over bounded plots}\label{integrationiteratedintegrals}

A \emph{bounded} plot of a differentiable space is a plot with bounded domain.  Let $\omega_1, \ldots , \omega_k$ be differential forms of positive degree on a smooth manifold $M$, and let $\phi$ be a bounded plot of $P(M; x_0, x_1)$ with domain $U$. Then the \emph{integration of~$\displaystyle \int \omega_1\ldots \omega_k$ over $\phi$}, denoted by~$\displaystyle \langle \int \omega_1\ldots \omega_k, \phi \rangle$, is defined by the following equation:
\begin{equation*}
\langle \int \omega_1\ldots \omega_k, \phi \rangle =
\begin{cases}\displaystyle
 \int_U \left(\int \omega_1\ldots \omega_k\right)_{\! \phi} & \text{if $\mathrm{deg}(\omega_1) + \cdots + \mathrm{deg}(\omega_k) - k = \dim U$,}\\
 0 & \text{otherwise.}
\end{cases}
\end{equation*}
Note that for $k = 1$, we get the integral:
\begin{equation*}
\langle \int \omega_1, \phi \rangle = \int_{[0,1]\times U}\widetilde{\phi}^*\omega_1,
\end{equation*}
where $\widetilde{\phi}$ is the adjoint map to the plot $\phi$.  In particular, if $U = *$, then the above is simply the integration of a differential form on $M$ along a path. 

For $k = 0$,  we set:
\begin{equation*}
\langle \int \omega_1\ldots \omega_k, \phi \rangle = \delta_0^{\dim U}.
\end{equation*}

For any bounded plot $\phi \colon U \to P(M; x_0,x_1)$, taking the integration over $\phi$ defines a linear map 
$$
\langle\,  \cdot\, , \phi\rangle \colon B^*(M) \to \R.
$$

\section{Formal power series connections}\label{formalpowerseriesconnections}

\noindent In this section, we build on Chen's theory of formal power series connections \cite{chen1973, chen1977} and its formulation by Kohno~\cite{kohno2016higher, kohno2020higher, Koh2022}.  We recall the definition of the cobar complex of a coaugmented differential graded coalgebra, the definition of a formal power series connection, and the definition of its transport.  We then recall a result of Chen that relates the transport of a formal power series connection to the homology of the loop space of a simply connection manifold.

Throughout this section, $C$ denotes a coaugmented differential graded coalgebra.

\subsection{The cobar complex}
The \emph{free augmented algebra} on a graded vector space $V$ is the algebra $TV$ given by
\begin{equation*}
TV = \R \oplus V \oplus \left(V\otimes V\right) \oplus \left(V\otimes V \otimes V\right) \oplus \cdots
\end{equation*}
with multiplication given by concatenation.  The \emph{desuspension} of a graded vector space $V = \bigoplus_{n\geqslant 1} V_n$ is the graded vector space $s^{-1}V$ given by $(s^{-1}V)_n = V_{n+1}$. The element of $s^{-1}V$ corresponding to $v\in V$ is denoted by $s^{-1}v$.  

Let $\eta\colon \R \to C$ denote the coaugmentation of the coalgebra $C$, and let $\partial$ denote its differential. We suppose $\partial$ is of degree $-1$.  The \emph{coaugmentation coideal of $C$} is $\bar{C} = \mathrm{coker}(\eta)$.  The \emph{reduced coproduct $\bar{\Delta}$ of $C$} is the composite $\bar{\Delta} = (q\otimes q )\circ \Delta\colon C \to \bar{C}\otimes \bar{C}$, where $\Delta$ denotes the coproduct of $C$ and $q\colon C\to \bar{C}$ denotes the quotient map. The \emph{cobar complex} of a coaugmented differential graded coalgebra $C$ is the differential graded algebra $\Omega (C) = (T(s^{-1}C), d_\Omega)$ with differential $d_\Omega$ defined by
\begin{equation*}
d_\Omega (s^{-1}c) = -s^{-1}\partial c + \sum(-1)^{\mathrm{deg}(c_{(1)})}s^{-1}c_{(1)}\otimes s^{-1}c_{(2)},
\end{equation*}
where $\bar{\Delta}(c) = \sum c_{(1)}\otimes c_{(2)}$.  We denote by $\Omega^n(C)$, for some $n\geqslant 0$, the subspace of $\Omega(C)$ generated by homogeneous elements of degree $n$.  Notice that the differential $d_\Omega$ lowers the degree by 1. 

An element $x \in \Omega(C)$ is \emph{homogeneous of length $k$}, for some integer $k\geqslant 1$,  if there exist elements~$c_1, \ldots c_k \in C$ such that 
$$
x = s^{-1}c_1\otimes \cdots \otimes s^{-1}c_k.
$$
In that case, we write 
$$
x = [c_1\vert \cdots \vert c_k].
$$
By convention, an element $x\in \Omega(C)$ is \emph{homogenenous of length $0$} if $x\in \R$. 
For any $k\geqslant 0$, we denote by~$F_k(\Omega(C))$ the subspace of $\Omega(C)$ generated by homogeneous elements of length at least ~$k$. Notice that~$d\left(F_k(\Omega(C))\right) \subset F_k(\Omega(C))$.  There is a descending filtration of complexes:
\begin{equation*}
\Omega(C) = F_0(\Omega(C)) \supset F_1(\Omega(C)) \supset F_2(\Omega(C)) \supset \cdots
\end{equation*}
Denote by $\widehat{\Omega(C)}$ the completion of the cobar complex $\Omega(C)$ with respect to this filtration, \emph{i.e.}
$$
\widehat{\Omega(C)} = \Bigl\{ (c_k)_{k\geqslant 0} \in \prod\limits_{k\geqslant 0}\left(\Omega(C)/F_k\left(\Omega(C)\right)\right) \Bigm| c_i = c_j \left(\mathrm{mod}\  F_k\left(\Omega(C)\right)\right) \text{ for all }i\leqslant j \Bigr\} .
$$

For any $n\geqslant 0$, we denote by $\widehat{\Omega^n(C)}$ the subspace of $\widehat{\Omega(C)}$ generated by homogeneous elements of degree $n$.  $\widehat{\Omega(C)}$ is a differential graded algebra. In particular, for any nonnegative integers $i$ and $j$, the product induces an inclusion of subspaces: $$\widehat{\Omega^i(C)} \otimes \widehat{\Omega^j(C)} \subset \widehat{\Omega^{i+j}(C)}.$$
Notice that $\widehat{\Omega^0(C)}$ is a subalgebra of $\widehat{\Omega(C)}$.

\subsection{Formal power series connections}

Let $(C_{dR}^*(M), d)$ denote the differential graded algebra of differential forms on a smooth manifold $M$. The \emph{completed tensor product of $C_{dR}^*(M)$ and $\Omega(C)$}, where~$\Omega(C)$ is the cobar complex of $C$, is the algebra $C_{dR}^*(M)\, \widehat{\otimes}\, \Omega(C)$ whose elements are formal sums $\sum_{i\geqslant 0}\omega_i \otimes c_i$ with homogeneous $\omega_i \in C_{dR}^*(M)$ and $c_i \in \Omega(C)$, such that for each $n\geqslant 0$, there is only a finite number of terms $\omega_i \otimes c_i$ verifying $\mathrm{deg}(\omega_i) + \mathrm{deg}(c_i) = n$. The product of two elements $\sum_{i\geqslant 0}\omega_i \otimes c_i$ and $\sum_{j\geqslant 0}\psi_j\otimes d_j$ of $C_{dR}^*(M)\,  \widehat{\otimes}\,  \Omega(C)$ is defined by:
\begin{equation*}
(\sum_{i\geqslant 0}\omega_i \otimes c_i) \wedge (\sum_{j\geqslant 0}\psi_j\otimes d_j) = \sum_{i,j \geqslant 0}\omega_i\wedge \psi_j \otimes c_i d_j,
\end{equation*}
which gives $C_{dR}^*(M)\, \widehat{\otimes}\, \Omega(C)$ the structure of an algebra with unit $1 = 1\otimes 1$.
We extend $d$ and $d_\Omega$ to~$C_{dR}^*(M)\, \widehat{\otimes}\, \Omega(C)$ via:
\begin{align*}
d\left(\sum_{i\geqslant 0}\omega_i \otimes c_i \right) &= \sum_{i\geqslant 0}d\omega_i \otimes c_i,\\
d_\Omega\left(\sum_{i\geqslant 0}\omega_i \otimes c_i\right) &= \sum_{i\geqslant 0}\omega_i \otimes d_\Omega c_i.
\end{align*}
Let $\varepsilon : C_{dR}^*(M) \to C_{dR}^*(M)$ denote the automorphism given by $\varepsilon (\omega) = (-1)^{\mathrm{deg}(\omega)}\omega$ for any homogeneous element $\omega \in C_{dR}^*(M)$. We extend $\varepsilon$ to $C_{dR}^*(M)\,  \widehat{\otimes}\, \Omega(C)$ via:
\begin{equation*}
\varepsilon\left(\sum_{i\geqslant 0}\omega_i \otimes c_i\right) = \sum_{i\geqslant 0}\varepsilon(\omega_i) \otimes c_i.
\end{equation*}
A \emph{formal power series connection} on $M$ is an element $\displaystyle \omega = \sum_{i\geqslant 0}\omega_i \otimes c_i \in C_{dR}^*(M)\, \widehat{\otimes}\, \Omega(C)$ such that:
\begin{itemize}
\item for all $i \geqslant 0$,  $\mathrm{deg}(\omega_i) = \mathrm{deg}(c_i),$
\item the \emph{twisted cochain condition} is verified: $d_\Omega \omega + d \omega = \varepsilon(\omega) \wedge \omega$.
\end{itemize}

\subsection{The transport of a formal power series connection}\label{transport} Let $\Omega(C)$ denote the cobar complex of~$C$, and let $B^*(M)$ denote the bar complex of iterated integrals on a smooth manifold $M$.  A nonzero element $\sum_{i\geqslant 0}b_i\otimes c_i \in B^*(M)\, \widehat{\otimes}\, \Omega(C)$ is \emph{homogeneous of length~$k$}, for some $k\geqslant 1$, if for all $i\geqslant 0$ the element $b_i$ is homogeneous of length~$k$ (see Section~\ref{barcomplex}).  For all $k\geqslant 1$, let~$F_k$ denote the vector subspace of $B^*(M)\, \widehat{\otimes}\, \Omega(C)$ generated by homogeneous elements of length at least $k$. This defines a descending filtration of vector subspaces:
$$ 
B^*(M)\, \widehat{\otimes}\, \Omega(C) = F_1 \supset F_2 \supset F_3 \supset \cdots
$$
Denote by $\widehat{B^*(M)\, \widehat{\otimes}\, \Omega(C)}$ the completion of $B^*(M)\, \widehat{\otimes}\, \Omega(C)$ with respect to this filtration.

The \emph{transport} $T_\omega$ of a formal power series connection $\omega = \sum_{i\geqslant 0}\omega_i \otimes c_i \in C^*_{dR}(M)\, \widehat{\otimes}\, \Omega(C)$ is the element:
$$T_\omega =  1 + \sum_{k\geqslant 1} \sum_{i_1, \ldots i_k \geqslant 0}\int \omega_{i_1}\ldots \omega_{i_k} \otimes c_{i_1}\cdots c_{i_k} \in \widehat{B^*(M)\, \widehat{\otimes}\, \Omega(C)}.
$$
Notice that if $\mathrm{deg}(\omega_i) \geqslant 2$ for all $i\geqslant 0$, then $T_\omega \in B^*(M)\, \widehat{\otimes}\, \Omega(C)$.  For any bounded plot $\phi$ of the pointed path space of $M$,  the integration of iterated integrals (see Section \ref{integrationiteratedintegrals}) extends to define an element
$$
\langle T_\omega, \phi\rangle = \sum_{k\geqslant 0} \sum_{i_1, \ldots i_k \geqslant 0}\langle \int \omega_{i_1}\ldots \omega_{i_k}, \phi \rangle\,  c_{i_1}\cdots c_{i_k} \in \widehat{\Omega(C)}.
$$

\subsection{The homology of loop spaces}
In \cite{chen1977}, Chen uses the transport of a formal power series connection to study the homology of the loop space of a simply connected manifold $M$.  He sets 
$$ \omega = \sum\limits_{i\geqslant 0}\omega_i \otimes c_i \in C^*_{dR}(M)\otimes \Omega(H_{\geqslant 1}(M; \R)),
$$
where $H_{\geqslant 1}(M; \R) = \bigoplus_{k\geqslant 1} H_k(M;\R)$ is the graded vector space defined by the positive degree part of the singular homology of $M$ with real coefficients, $(c_i)_i$ is a basis of $H_{\geqslant 1}(M;\R)$, and $(\omega_i)_i$ are closed forms representing cohomology classes dual to $(c_i)_i$. He shows the existence of a derivation $\partial$ on $\Omega(H_*(M;\R))$ such that:
\begin{thm}[\cite{chen1977}]
There is an isomorphism $H_{*}(\Omega M; \R) \cong H_{*}(\Omega(H_{\geqslant 1}(M; \R)), \partial)$ induced by the map
$$ 
\begin{array}{ccl}
C_*(\Omega M; \R) & \to & \Omega(H_{\geqslant 1}(M; \R))\\
 c & \mapsto & \langle T_\omega, c\rangle,
\end{array}
$$
where $\langle T_\omega, c\rangle$ denotes the integration of the transport $T_\omega$ over $c$. 
\end{thm}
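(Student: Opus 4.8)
\section*{Proof proposal}

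The plan is to show that the map $\Phi \colon c \mapsto \langle T_\omega, c\rangle$ is a chain map from $(C_*(\Omega M;\R), \delta)$ to $(\Omega(H_{\geqslant 1}(M;\R)), \partial)$, where $\delta$ denotes the singular boundary and $\partial$ is the derivation constructed before the theorem statement, and then to prove that it is a quasi-isomorphism by a filtration argument that reduces the claim to Chen's de Rham theorem for iterated integrals. I would invoke simple connectivity of $M$ twice: first to arrange that the closed forms $\omega_i$ all satisfy $\deg(\omega_i) \geqslant 2$ (possible since $H^1(M;\R) = 0$), which guarantees that for a fixed $n$-chain $c$ only the terms with length $k \leqslant n$ contribute to $\langle T_\omega, c\rangle$, so that $\Phi(c)$ genuinely lands in $\Omega(H_{\geqslant 1}(M;\R))$ and not merely in its completion; and second to ensure convergence of the comparison spectral sequence below.

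First I would verify that $\Phi$ is a chain map. The twisted cochain condition $d_\Omega\omega + d\omega = \varepsilon(\omega)\wedge\omega$ is precisely what makes the transport $T_\omega$ horizontal, in the sense that its total differential is governed by $\partial$. Concretely, combining Lemma \ref{lem21} for the exterior derivative of an iterated integral with Stokes' theorem on the simplex $\Delta^k$ and on the chain $c$, the two sums appearing in Lemma \ref{lem21} — the one differentiating a single factor and the one wedging adjacent factors — match, respectively, the internal cobar differential $d_\Omega$ coming from the reduced coproduct of $H_{\geqslant 1}(M;\R)$ and the higher-length corrections assembled into $\partial$. Carrying out this bookkeeping, with attention to the signs $\nu_i$ of Lemma \ref{lem21} and to the sign automorphism $\varepsilon$, yields $\Phi(\delta c) = \pm\,\partial\,\Phi(c)$, so that $\Phi$ descends to homology.

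Next I would prove that $\Phi$ induces an isomorphism by filtering both sides compatibly and comparing associated graded complexes. On $\Omega(H_{\geqslant 1}(M;\R))$ I would use the length filtration $F_k(\Omega(C))$, and on $C_*(\Omega M;\R)$ the corresponding filtration by iterated-integral length (equivalently, the Eilenberg--Moore filtration of the based loop space). The derivation $\partial$ strictly raises length on its higher components, so on the associated graded it reduces to the internal cobar differential $d_\Omega$ determined solely by the coproduct on $H_{\geqslant 1}(M;\R)$. The $E_1$-page on the algebraic side is therefore the cobar homology of the coalgebra $H_{\geqslant 1}(M;\R)$, and the map induced by $\Phi$ on $E_1$ is exactly the iterated-integral pairing between the bar complex $B^*(M)$ and the chains of $\Omega M$. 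Chen's fundamental theorem on iterated integrals identifies this pairing as an isomorphism for simply connected $M$, so $\Phi$ is an isomorphism on $E_1$; simple connectivity also makes both spectral sequences first-quadrant and convergent, and the standard comparison theorem then upgrades the $E_1$-isomorphism to the desired isomorphism on homology.

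The main obstacle is the identification of the $E_1$-page, namely the input that iterated integrals compute the full homology of the based loop space. This is Chen's iterated-integral de Rham theorem, and establishing it is where the genuine analytic and homotopical content lies: one must show that the bar complex $B^*(M)$ is a model for the cochains on $\Omega M$, which in turn rests on simple connectivity to guarantee convergence of the Eilenberg--Moore spectral sequence and to control the completion. The remaining difficulties are bookkeeping: tracking the signs through Lemma \ref{lem21} in the chain-map step, and verifying the finiteness that places $\langle T_\omega, c\rangle$ in $\Omega(H_{\geqslant 1}(M;\R))$.
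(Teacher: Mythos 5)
Before anything else, note a structural point: the paper does not prove this statement. It is Chen's theorem, recalled verbatim with the citation \cite{chen1977} purely as background motivation for the notion of transport; none of the paper's lemmas or sections establish it. So your proposal can only be judged on its own merits, not against an internal argument. On those merits, the first half of your proposal is sound: Stokes' theorem converts $\langle T_\omega, \delta c\rangle$ into $\langle dT_\omega, c\rangle$, and the twisted cochain condition converts $dT_\omega$ into $\pm\partial T_\omega$, so $\Phi$ is a chain map --- this is exactly the mechanism the paper itself uses for its holonomy lemmas (Lemmas \ref{lem2tracks} and \ref{lem3tracks}), and your use of simple connectivity to force $\deg(\omega_i)\geqslant 2$, hence finiteness of $\langle T_\omega, c\rangle$ for each fixed chain, is correct.

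The gap is in the quasi-isomorphism half, and it is concrete. First, the filtration you propose on $C_*(\Omega M;\R)$ ``by iterated-integral length (equivalently, the Eilenberg--Moore filtration)'' does not exist on singular chains of the loop space: the Eilenberg--Moore filtration lives on a cosimplicial or cobar \emph{model} (e.g.\ Adams' cobar construction on $C_*(M)$), and transporting $\Phi$ across that equivalence compatibly with filtrations is a substantive step, not a parenthetical. Second, the $E_1$ analysis is miswired: in the length filtration of $(\Omega(H_{\geqslant 1}(M;\R)),\partial)$ the quadratic (coproduct) part of $\partial$ \emph{also} raises length by one, so it appears as the $d_1$ differential, not as the associated-graded differential; and the map one must show is an isomorphism at that stage is induced by pairing the single closed forms $\omega_i$ against homology classes of $M$ itself --- i.e.\ the finite-dimensional duality $\langle \omega_i, c_j\rangle = \delta_{ij}$ built into the choice of $\omega$, together with de Rham's theorem for $M$ --- not ``the iterated-integral pairing between $B^*(M)$ and chains of $\Omega M$,'' which does not survive passage to the associated graded. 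Chen's loop-space de Rham theorem enters a correct proof in a different place: either one identifies $H(B^*(M))\cong H^*(\Omega M;\R)$ once and for all and then runs the length-filtration comparison \emph{purely algebraically} between $(\Omega(H_{\geqslant 1}(M;\R)),\partial)$ and the dual of $B^*(M)$ (this is essentially Chen's route), or one uses Adams' cobar equivalence and the Eilenberg--Moore spectral sequence with the duality of bases as $E_1$ input, in which case the loop-space de Rham theorem is not needed at all. As written, your argument borrows the filtration frame of the second route but the key input of the first, and the two do not compose into a proof.
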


\section{Higher holonomy functors}\label{higherholonomyfunctors}

\noindent In this section, we show how the transport of a formal power series connection induces functors from the path groupoid, the path 2-groupoid, and the path 3-groupoid of a manifold.  

Throughout this section, we fix $\omega \in C_{dR}^*(M) \, \widehat{\otimes }\, \Omega(C)$ a formal power series connection on a manifold~$M$, with $\Omega(C)$ the cobar complex of a coaugmented differential graded coalgebra $C$ (see Section \ref{formalpowerseriesconnections}).  We denote by~$T_\omega$ the transport of $\omega$.

\subsection{Holonomy functors}

Let $\omega_1, \ldots , \omega_k$ be differential forms on $M$.  Kohno shows \cite{kohno2016higher} that if $\gamma$ and $\gamma'$ are rank-1 homotopic 1-paths, then:
\begin{equation*}
\langle \int \omega_1 \ldots  \omega_k, \gamma\rangle  = \langle \int  \omega_1 \ldots \omega_k, \gamma' \rangle,
\end{equation*}
where $\langle \cdot, \cdot\rangle$ denotes the integration of iterated integrals over bounded plots (see Section \ref{iteratedintegrals}).
Moreover, the composition of 1-tracks is compatible with integration: if $\gamma$ and $\delta$ are 1-tracks such that $\gamma(1) = \delta(0)$, then
$$
\langle T_\omega, \gamma\circ \delta \rangle = \langle T_\omega, \gamma\rangle \langle T_\omega, \delta\rangle.
$$

Recall the algebra $\widehat{\Omega^0(C)}$ generated by the homogeneous elements of degree 0 in the completion of the cobar complex of $C$ (see Section \ref{formalpowerseriesconnections}). We define the category $\mathcal{C}_1(C)$ by the following data:
\begin{itemize}
\item the category $\mathcal{C}_1(C)$ has a single object $*$,
\item an endomorphism of $*$ in $\mathcal{C}_1(C)$ is an element of $\widehat{\Omega^0(C)}$ with composition given by the product in $\widehat{\Omega^0(C)}$.
\end{itemize}

The \emph{holonomy functor of $\omega$} is the functor
$$
\mathrm{Hol}^1_\omega \colon \mathcal{P}_1(M) \to \mathcal{C}_1(C)
$$
defined by
$$ \mathrm{Hol}^1_\omega ([\gamma]) = \langle T_\omega, \gamma\rangle,$$
for any 1-track $[\gamma]$.

\subsection{Integration over 2-tracks}

In this section, we extend the notion of integration over 2-paths to that of integration over 2-tracks.  Moreover, we show that the integration of the transport $T_\omega$ of a formal power series connection $\omega$ over a 2-track is compatible with the vertical and the horizontal composition.

\begin{lem}\label{lem2tracks}
If $g$ and $h$ are rank-2 homotopic 2-paths in $M$, then
$$ \langle T_\omega, g\rangle - \langle T_\omega, h\rangle \in d_\Omega\left( \widehat{\Omega^1(C)} \otimes \widehat{\Omega^1(C)}\right). $$
\end{lem}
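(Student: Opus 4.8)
The plan is to realize the rank-2 homotopy as a two-dimensional plot of the pointed path space and then to run a Stokes-type argument, exploiting the flatness that the twisted cochain condition forces on $T_\omega$. First I would fix a rank-2 homotopy $\alpha\colon [0,1]^3\to M$ with $\alpha(0,-,-)=g$ and $\alpha(1,-,-)=h$. Since $\alpha$ is a $3$-path, the sets $\alpha([0,1]^2\times\{0\})$ and $\alpha([0,1]^2\times\{1\})$ are single points $x_0$ and $x_1$, so for every $(r,s)\in[0,1]^2$ the curve $t\mapsto \alpha(r,s,t)$ is a $1$-path from $x_0$ to $x_1$. This yields a bounded plot $\Phi\colon [0,1]^2\to P(M;x_0,x_1)$, $\Phi(r,s)=\alpha(r,s,-)$, whose four boundary edges are, up to sign, the $1$-plots $g$ (at $r=0$), $h$ (at $r=1$), and $\alpha(-,0,-)$, $\alpha(-,1,-)$ (at $s=0,1$); by definition of a rank-2 homotopy the last two are rank-1 homotopies.

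Next I would use that the twisted cochain condition, together with the formula of Lemma \ref{lem21}, makes $T_\omega$ closed for the total differential, $dT_\omega+d_\Omega T_\omega=0$ in $\widehat{B^*(M)\,\widehat{\otimes}\,\Omega(C)}$, where $d$ is the de Rham differential on the path space. Pairing this identity with $\Phi$, the term $\langle d_\Omega T_\omega,\Phi\rangle$ equals $d_\Omega\langle T_\omega,\Phi\rangle$, because $d_\Omega$ acts only on the $\Omega(C)$-factors, does not change form-degree, and commutes with integration over the plot; meanwhile Stokes' theorem on $[0,1]^2$ rewrites $\langle dT_\omega,\Phi\rangle$ as the alternating sum of the integrations of $T_\omega$ over the four boundary edges. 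Thus, up to sign,
$$
\langle T_\omega,g\rangle-\langle T_\omega,h\rangle+\langle T_\omega,\alpha(-,0,-)\rangle+\langle T_\omega,\alpha(-,1,-)\rangle=\pm\, d_\Omega\langle T_\omega,\Phi\rangle.
$$
To discard the two $s$-edges I would note that the adjoint of a rank-1 homotopy has rank at most $1$; since every form-degree-$1$ iterated integral $\int \omega_{i_1}\cdots \omega_{i_k}$ occurring in $T_\omega$ satisfies $\deg\omega_{i_1}+\cdots+\deg\omega_{i_k}=k+1$, at least one $\omega_{i_j}$ has degree $\geqslant 2$, and its pullback along a rank-$\leqslant 1$ map vanishes, killing the whole term. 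Hence both $s$-edge contributions are zero and $\langle T_\omega,g\rangle-\langle T_\omega,h\rangle=\pm\, d_\Omega\langle T_\omega,\Phi\rangle$.

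It then remains to check that $\langle T_\omega,\Phi\rangle$ lies in $\widehat{\Omega^1(C)}\otimes\widehat{\Omega^1(C)}$. Here I would run the same pullback argument with the rank bound of $\alpha$: a rank-2 homotopy has rank at most $2$, so a form-degree-$2$ term $\int \omega_{i_1}\cdots\omega_{i_k}$ survives integration over $\Phi$ only if every $\omega_{i_j}$ has degree $\leqslant 2$; combined with $\deg\omega_{i_1}+\cdots+\deg\omega_{i_k}=k+2$, this forces exactly two of the forms to have degree $2$ and the remaining $k-2$ to have degree $1$. Consequently the corresponding element $c_{i_1}\cdots c_{i_k}\in\Omega(C)$ has exactly two tensor factors of degree $1$ and all others of degree $0$, so splitting the product between its two degree-$1$ factors exhibits it as a product of two elements of $\widehat{\Omega^1(C)}$. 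Therefore $\langle T_\omega,\Phi\rangle\in\widehat{\Omega^1(C)}\otimes\widehat{\Omega^1(C)}$ and the difference lies in $d_\Omega\bigl(\widehat{\Omega^1(C)}\otimes\widehat{\Omega^1(C)}\bigr)$, as required.

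I expect the main obstacle to be the flatness identity $dT_\omega+d_\Omega T_\omega=0$: its derivation from Lemma \ref{lem21} and the twisted cochain condition is a sign-careful computation, and it must be paired with a clean justification of Stokes' theorem and fiber integration on the differentiable space $P(M;x_0,x_1)$. By contrast, the vanishing of the $s$-edges and the degree bookkeeping behind the factorization into $\widehat{\Omega^1(C)}\otimes\widehat{\Omega^1(C)}$ are routine once the rank arguments above are in place.
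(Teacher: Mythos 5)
Your proposal is correct and takes essentially the same route as the paper's proof: Stokes' theorem together with the twisted cochain condition (flatness of $T_\omega$) gives $\langle T_\omega, g\rangle - \langle T_\omega, h\rangle = \pm\, d_\Omega\langle T_\omega, H\rangle$, and the rank-2 bound on the homotopy places $\langle T_\omega, H\rangle$ in $\widehat{\Omega^1(C)}\otimes\widehat{\Omega^1(C)}$. The paper's proof is simply a terser version of yours --- it writes $g-h=\partial H$ and leaves implicit both the vanishing of the two rank-1-homotopy boundary edges and the degree bookkeeping behind the final membership claim, which you spell out correctly.
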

\begin{proof}
Let $g-h = \partial H$ with $H$ a rank-2 homotopy.
\begin{align*}
\langle T_\omega, g\rangle - \langle T_\omega, h\rangle 
= \langle T_\omega, \partial H\rangle 
= \langle  dT_\omega, H\rangle
= d_\Omega \langle T_\omega, H\rangle,
\end{align*}
where the second equality is a consequence of Stokes' theorem, and the third equality is a consequence of the twisted cochain condition. Since $H$ is rank-2, $\langle T_\omega, H\rangle \in \widehat{\Omega^1(C)} \otimes \widehat{\Omega^1(C)}$.
\end{proof}

\begin{rem*}Contrary to \cite[Proposition 3.4]{Koh2022}, we find that if $g$ and $h$ are rank-2 homotopic 2-paths and~$\omega_1, \ldots, \omega_k$ are differential forms of degree $\deg(\omega_1), \ldots, \deg(\omega_k)$, respectively, then we don't necessarily have
$$
\langle \int \omega_1\ldots\omega_k , g\rangle = \langle \int \omega_1\ldots\omega_k , h\rangle.
$$
Indeed, we need a stronger condition: that $g$ and $h$ are laminated rank-2 homotopic (see Lemma \ref{lemlam2tracks}). 
\end{rem*}

The vertical composition of 2-paths is compatible with integration:
\begin{lem}\label{lemvert2tracks}
If $g$ and $h$ are 2-paths such that $g(1,-) = h(0,-)$, then
$$ 
\langle T_\omega,  g \cdot h\rangle = \langle T_\omega, g\rangle + \langle T_\omega, h\rangle.
$$
\end{lem}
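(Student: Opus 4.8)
The plan is to recognise the integration of $T_\omega$ over a $2$-path as the line integral of a $1$-form along a path in the pointed path space, and then to invoke the additivity of such line integrals under concatenation.

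First I would make explicit the plot attached to a $2$-path. A $2$-path $g$ with $g([0,1]\times\{0\}) = \{x_0\}$ and $g([0,1]\times\{1\}) = \{x_1\}$ determines a bounded plot
$$
\phi_g \colon [0,1] \to P(M; x_0, x_1), \qquad \phi_g(s)(t) = g(s,t),
$$
whose domain $U = [0,1]$ is $1$-dimensional. The vertical composition is arranged precisely so that $\phi_{g\cdot h}$ is the concatenation of $\phi_g$ and $\phi_h$ as paths in the path space: writing $r(s) = 2s$ and $r'(s) = 2s-1$, the restriction of $\phi_{g\cdot h}$ to $[0,\tfrac12]$ equals $\phi_g\circ r$ and its restriction to $[\tfrac12,1]$ equals $\phi_h\circ r'$; that is, these restrictions go through $\phi_g$ and $\phi_h$ via $r$ and $r'$, respectively. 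The hypothesis $g(1,-) = h(0,-)$ is exactly what makes $\phi_{g\cdot h}$ well defined.

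Since $\dim U = 1$, only the iterated integrals $\alpha = \int \omega_{i_1}\ldots\omega_{i_k}$ of degree $\deg(\omega_{i_1}) + \cdots + \deg(\omega_{i_k}) - k = 1$ contribute to $\langle T_\omega, -\rangle$, and for such a term the pairing is the genuine integral $\langle \alpha, g\rangle = \int_{[0,1]} \alpha_{\phi_g}$ of the $1$-form $\alpha_{\phi_g}$ on $[0,1]$. It therefore suffices to prove the per-term additivity $\langle \alpha, g\cdot h\rangle = \langle \alpha, g\rangle + \langle \alpha, h\rangle$ and then to sum against the coefficients $c_{i_1}\cdots c_{i_k}$ in $\widehat{\Omega(C)}$. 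To do so I would split $\int_{[0,1]} \alpha_{\phi_{g\cdot h}}$ as $\int_{[0,1/2]} + \int_{[1/2,1]}$. The inclusion $\iota\colon [0,\tfrac12]\hookrightarrow [0,1]$ realises the restricted plot as going through $\phi_{g\cdot h}$ via $\iota$, so the naturality of differential forms on differentiable spaces identifies the restriction to $[0,\tfrac12]$ of the $1$-form $\alpha_{\phi_{g\cdot h}}$ with the form associated to the restricted plot; applying naturality again to $\phi_{g\cdot h}|_{[0,1/2]} = \phi_g\circ r$ rewrites it as $r^*\alpha_{\phi_g}$. A change of variables then gives $\int_{[0,1/2]} r^*\alpha_{\phi_g} = \int_{[0,1]} \alpha_{\phi_g} = \langle\alpha, g\rangle$, and the identical computation on $[\tfrac12,1]$ using $r'$ produces $\langle\alpha, h\rangle$.

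The one point requiring care is the bookkeeping just described: one must distinguish the restriction to a subinterval of the $1$-form $\alpha_{\phi_{g\cdot h}}$ from the $1$-form associated to the restricted plot, and reconcile them through the naturality axiom applied to the inclusion of the subinterval. Once this is done the argument reduces to the elementary additivity of the integral of a $1$-form over a subdivision of $[0,1]$, together with a single change of variables on each piece, so I do not expect any genuine obstacle beyond this routine verification.
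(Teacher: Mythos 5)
Your proof is correct and follows essentially the same route as the paper: both reduce to the per-term additivity of iterated integrals of total degree $\deg(\omega_{i_1})+\cdots+\deg(\omega_{i_k})-k=1$, split the integral over $[0,1]$ at $s=\tfrac12$, and identify each half with $\langle\,\cdot\,,g\rangle$ and $\langle\,\cdot\,,h\rangle$ by the change of variables $s\mapsto 2s$, resp.\ $s\mapsto 2s-1$. The only difference is presentational: you phrase the reparametrization step through the naturality axiom for forms on differentiable spaces, while the paper writes the same computation directly via the pullback $\widetilde{g\cdot h}^*$ and integration along the fiber of $\Delta^k$.
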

\begin{proof}
For any differential forms $\omega_1, \ldots \omega_k$ of positive degree such that $\deg(\omega_1) + \cdots +\deg(\omega_k) -k = 1$, we have: 
\begin{align*}
\langle \int \omega_1 \ldots \omega_k ,  g\cdot h\rangle 
&= \int_{[0,\frac{1}{2}]}\int_{\Delta^k}\widetilde{g\cdot h}^*(\omega_1\times \cdots \times \omega_k) + \int_{[\frac{1}{2},1]}\int_{\Delta^k}\widetilde{g\cdot h}^*(\omega_1\times \cdots \times \omega_k),\\
&= \int_{[0,1]}\int_{\Delta^k}\widetilde{g}^*(\omega_1\times \cdots \times \omega_k) + \int_{[0,1]}\int_{\Delta^k}\widetilde{h}^*(\omega_1\times \cdots \times \omega_k),\\
&= \langle \int \omega_1 \ldots \omega_k ,  g\rangle + \langle \int \omega_1 \ldots \omega_k ,  g\rangle. \qedhere
\end{align*}
\end{proof}
We have the following formulas for the horizontal composition of 2-paths.
\begin{lem}\label{lemhoriz2tracks}
If $g$ and $h$ are 2-paths such that $g(-, 1) = h(-,0)$, then the following equations hold in the quotient space $ \widehat{\Omega^1(C)}/ d_\Omega\left( \widehat{\Omega^1(C)} \otimes \widehat{\Omega^1(C)}\right)$:
\begin{align*}
\langle T_\omega, g\circ h\rangle &= \langle T_\omega, g\rangle \langle T_\omega,  h(0,-) \rangle + \langle T_\omega, g(1,-) \rangle \langle T_\omega, h\rangle,\\
&= \langle T_\omega, g(0,-)\rangle \langle T_\omega, h\rangle + \langle T_\omega, g\rangle \langle T_\omega, h(1,-)\rangle. 
\end{align*}
\end{lem}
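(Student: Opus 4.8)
The plan is to reduce the identity to the multiplicativity of iterated integrals under concatenation of the underlying paths, and then to absorb the resulting $s$-dependent error terms into $d_\Omega(\widehat{\Omega^1(C)}\otimes\widehat{\Omega^1(C)})$ by a Stokes argument of the same type as in Lemma \ref{lem2tracks}.

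First I would unwind the horizontal composite. For each fixed $s\in[0,1]$ the path $(g\circ h)(s,-)$ is the concatenation $g(s,-)\circ h(s,-)$, so splitting the simplex $\Delta^k$ in the definition of $\int\omega_1\ldots\omega_k$ according to whether each coordinate lands on the first or the second half produces the usual concatenation formula for iterated integrals. Assembling these over all iterated integrals and all cobar coefficients, the restriction of $T_\omega$ to the plot $s\mapsto(g\circ h)(s,-)$ factors, over $[0,1]$, as a product of the restrictions of $T_\omega$ to $g$ and to $h$. Because $\dim[0,1]=1$, each such restriction has only a form-degree-$0$ part (the function $s\mapsto\langle T_\omega,g(s,-)\rangle\in\widehat{\Omega^0(C)}$) and a form-degree-$1$ part; writing $A_1$ and $B_1$ for the form-degree-$1$ parts attached to $g$ and $h$, so that $\int_0^1 A_1=\langle T_\omega,g\rangle$ and $\int_0^1 B_1=\langle T_\omega,h\rangle$, extracting the form-degree-$1$ part of the product and integrating gives
\[
\langle T_\omega,g\circ h\rangle=\int_0^1\big(\langle T_\omega,g(s,-)\rangle\,B_1+A_1\,\langle T_\omega,h(s,-)\rangle\big).
\]
This already has the shape of the claimed formula, except that the scalar factors are evaluated at the running parameter $s$ rather than frozen at an endpoint.

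Next I would record the Stokes identity behind Lemma \ref{lem2tracks} in the form I need: applying Stokes' theorem together with the twisted cochain condition on a subinterval shows that, for the restriction of $g$ to $[s,1]$,
\[
\langle T_\omega,g(s,-)\rangle-\langle T_\omega,g(1,-)\rangle=-d_\Omega G(s),\qquad G(s):=\int_s^1 A_1\in\widehat{\Omega^1(C)},
\]
and likewise $\langle T_\omega,h(s,-)\rangle-\langle T_\omega,h(0,-)\rangle=d_\Omega H(s)$ with $H(s):=\int_0^s B_1\in\widehat{\Omega^1(C)}$. Substituting these into the difference between $\langle T_\omega,g\circ h\rangle$ and the first right-hand side $\langle T_\omega,g\rangle\langle T_\omega,h(0,-)\rangle+\langle T_\omega,g(1,-)\rangle\langle T_\omega,h\rangle$ turns that difference into
\[
\int_0^1\big(-(d_\Omega G)\,B_1+A_1\,(d_\Omega H)\big).
\]

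Finally I would recognize this integral as a boundary. Since $H(s)=\int_0^s B_1$ and $G(s)=\int_s^1 A_1$ give $dH=B_1$ and $dG=-A_1$ for the de Rham differential in $s$, and $d_\Omega$ is a graded derivation anticommuting with this differential, the Leibniz rule rewrites the integrand as a combination of $d_\Omega(G\,dH)$ and a total $s$-derivative of $G\,d_\Omega H$. The total-derivative part integrates to the boundary values $G(1)\,d_\Omega H(1)-G(0)\,d_\Omega H(0)$, both of which vanish since $G(1)=0$ and $H(0)=0$; the remaining part integrates to $d_\Omega\big(\int_0^1 G\,dH\big)$, and $\int_0^1 G\,dH$ is a product of two elements of $\widehat{\Omega^1(C)}$, hence lies in the image of $\widehat{\Omega^1(C)}\otimes\widehat{\Omega^1(C)}$. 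Therefore the difference lies in $d_\Omega(\widehat{\Omega^1(C)}\otimes\widehat{\Omega^1(C)})$ and vanishes in the quotient, giving the first equality; the second follows symmetrically, freezing $g$ at $s=0$ and $h$ at $s=1$ and using the partial transports $\int_0^s A_1$ and $\int_s^1 B_1$ instead. The main obstacle I expect is the exact tracking of the Koszul signs when commuting $d_\Omega$ past the de Rham differential and applying the Leibniz rule, since the coefficients $G$ and $H$ sit in odd cobar degree, together with the sign in the simplex-splitting of the concatenation formula; everything else is already licensed by Lemma \ref{lem21} and the twisted cochain condition as in Lemma \ref{lem2tracks}.
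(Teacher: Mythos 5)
Your proof is correct at the level of rigor the paper operates at, but it takes a genuinely different route from the paper's own argument. The paper's proof is geometric and very short: it observes that $g\circ h$ is rank-2 homotopic to the vertical composite $(g\circ c_{h(0,-)})\cdot(c_{g(1,-)}\circ h)$ (and to $(c_{g(0,-)}\circ h)\cdot(g\circ c_{h(1,-)})$ for the second equality), so that Lemma \ref{lem2tracks} absorbs the discrepancy into $d_\Omega\left(\widehat{\Omega^1(C)}\otimes\widehat{\Omega^1(C)}\right)$, Lemma \ref{lemvert2tracks} turns the vertical composite into a sum, and Chen's concatenation formula \cite[Proposition 2.1.1]{chen1973} evaluates each whiskered piece as a product such as $\langle T_\omega, g\rangle\langle T_\omega, h(0,-)\rangle$. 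You never construct that homotopy; instead you work analytically on the plot $s\mapsto (g\circ h)(s,-)$, applying the same concatenation formula fiberwise in $s$, decomposing by form degree, and then performing an explicit integration by parts that exhibits the defect as $\pm d_\Omega\bigl(\int_0^1 G\,dH\bigr)$ plus boundary terms killed by $G(1)=0$ and $H(0)=0$. Both arguments thus rest on the same two inputs (Chen's Proposition 2.1.1 and the Stokes/twisted-cochain identity behind Lemma \ref{lem2tracks}); they differ in how the error term is produced: the paper delegates it to an asserted-but-unverified rank-2 homotopy, while you compute an explicit primitive for it, which costs more bookkeeping but fills in the one step the paper leaves implicit. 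Two remarks on the loose ends you flag. First, the sign ambiguity is harmless: whichever sign the localized Stokes identity carries, the defect is $\pm d_\Omega$ of an element in the image of $\widehat{\Omega^1(C)}\otimes\widehat{\Omega^1(C)}$, so it vanishes in the quotient either way; moreover, with the paper's conventions (Section \ref{formalpowerseriesconnections}) $d$ and $d_\Omega$ commute rather than anticommute, so your Leibniz step simplifies to $d_\Omega(G\,dH)+d(G\,d_\Omega H)=(d_\Omega G)(dH)+(dG)(d_\Omega H)$ with no leftover terms. Second, your implicit claim that $\int_0^1 G\,dH$ lies in the image of $\widehat{\Omega^1(C)}\otimes\widehat{\Omega^1(C)}$ has exactly the same status as the paper's claim in Lemma \ref{lem2tracks} that $\langle T_\omega, H\rangle$ does: termwise in the length filtration it is a convergent sum of products of degree-1 elements, which is what that inclusion means there.
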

\begin{proof}
Note that $g\circ h$ is rank-2 homotopic to $(g\circ c_{h(0,-)}) \cdot (c_{g(1,-)}\circ h)$ and to $(c_{g(0,-)}\circ h) \cdot (g \circ c_{h(1,-)})$.  Therefore, the equations result from Lemma \ref{lem2tracks}, Lemma \ref{lemvert2tracks} and  \cite[Proposition 2.1.1]{chen1973}.
\end{proof}

\subsection{The target 2-category}

Recall that $\widehat{\Omega(C)}$ denotes the completion of the cobar complex of~$C$, and that~$\widehat{\Omega^i(C)}$ is the vector subspace generated by homogeneous elements of degree $i$ (see Section \ref{formalpowerseriesconnections}). We define the strict 2-category $\mathcal{C}_2(C)$ by the following data:
\begin{itemize}
\item the 2-category $\mathcal{C}_2(C)$ has a single object $*$,
\item an endomorphism of $*$ in $\mathcal{C}_2(C)$ is an element of $\widehat{\Omega^0(C)}$, with composition given by the product in $\widehat{\Omega^0(C)}$,
\item for 1-morphisms $m$ and $n$ of $\mathcal{C}_2(C)$,  a 2-morphism from $m$ to $n$ is an element of the set
$$\mathcal{C}_2(C)(m,n) = \left\lbrace M \in \widehat{\Omega^1(C)}/d_\Omega\left( \widehat{\Omega^1(C)}\otimes \widehat{\Omega^1(C)} \right) \mid d_\Omega M = m - n \right\rbrace,$$
\item the vertical composition of 2-morphisms is given by
\[
\begin{array}{ccl}
\mathcal{C}_2(C)(m,n)\times \mathcal{C}_2(C)(n,p) & \to & \mathcal{C}_2(C)(m,p)\\
(M, N) & \mapsto & M+ N,
\end{array}
\]
\item the horizontal composition of 2-morphisms is given by
\[
\begin{array}{ccl}
\mathcal{C}_2(C)(m,n)\times \mathcal{C}_2(C)(p,q) & \to & \mathcal{C}_2(C)(m p,n q)\\
(M,N) & \mapsto & Mp + nN = mN + Mq.
\end{array}
\]
\end{itemize}
It is a straightforward verification that $\mathcal{C}_2(C)$ is a strict 2-category. 

\subsection{Holonomy 2-functors}

Recall from Section \ref{formalpowerseriesconnections} the transport $T_\omega$ of the formal power series connection $\omega$, and from Section \ref{iteratedintegrals} the integration $\langle \cdot, \cdot\rangle$ of iterated integrals over bounded plots.  

Let's define the association $\mathrm{Hol}^2_\omega$. For any 1-track $[\gamma]$,  we set
$$ \mathrm{Hol}^2_\omega ([\gamma]) = \langle T_\omega , \gamma\rangle.$$
For any 2-track $[g]$, we set
$$ \mathrm{Hol}^2_\omega ([g]) = [\langle T_\omega, g\rangle ],$$
where $[\langle T_\omega, g\rangle ]$ denotes the class of $\langle T_\omega, g\rangle$ in $ \widehat{\Omega^1(C)}/ d_\Omega\left( \widehat{\Omega^1(C)} \otimes \widehat{\Omega^1(C)}\right)$.
We deduce from Lemma \ref{lem2tracks} that the above association is well defined. 

From Lemmas \ref{lemvert2tracks} and \ref{lemhoriz2tracks}, we deduce  that the association~$\mathrm{Hol}^2_\omega$ is compatible the vertical and horizontal composition of 2-tracks:
\begin{thm}\label{thm2holonomy}
The transport of the formal power series connection $\omega$ induces a 2-functor:
$$ \mathrm{Hol}^2_\omega \colon \mathcal{P}_2(M) \to \mathcal{C}_2(C)$$ 
called the \emph{holonomy 2-functor of $\omega$}.
\end{thm}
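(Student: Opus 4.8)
The plan is to verify directly that $\mathrm{Hol}^2_\omega$ satisfies the axioms of a strict 2-functor between the strict 2-categories $\mathcal{P}_2(M)$ and $\mathcal{C}_2(C)$, assembling the three preceding lemmas together with one boundary computation. On objects there is nothing to check, since $\mathcal{C}_2(C)$ has a single object and $\mathrm{Hol}^2_\omega$ sends every point of $M$ to $*$. On 1-morphisms $\mathrm{Hol}^2_\omega$ coincides with $\mathrm{Hol}^1_\omega$, so well-definedness on 1-tracks is Kohno's rank-1 invariance, functoriality with respect to composition of 1-tracks is the multiplicativity $\langle T_\omega, \gamma\circ\delta\rangle = \langle T_\omega,\gamma\rangle\langle T_\omega,\delta\rangle$ recalled above, and preservation of identity 1-morphisms is $\langle T_\omega, c_x\rangle = 1$ in $\widehat{\Omega^0(C)}$.

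First I would check that $\mathrm{Hol}^2_\omega$ assigns to each 2-track a legitimate 2-morphism. Given a 2-path $g$ from $\gamma$ to $\gamma'$, Lemma \ref{lem2tracks} shows that $[\langle T_\omega, g\rangle]$ is independent of the representative, so $\mathrm{Hol}^2_\omega([g])$ is a well-defined element of $\widehat{\Omega^1(C)}/d_\Omega(\widehat{\Omega^1(C)}\otimes\widehat{\Omega^1(C)})$. To see that it lies in the correct hom-set $\mathcal{C}_2(C)(m,n)$, with $m = \langle T_\omega,\gamma\rangle$ and $n = \langle T_\omega,\gamma'\rangle$, I would run the same Stokes-plus-twisted-cochain argument as in the proof of Lemma \ref{lem2tracks}, but applied now to the $1$-chain $g$ in the pointed path space $P(M;x_0,x_1)$, whose boundary is $\partial g = \gamma - \gamma'$ up to orientation (the faces $t = 0$ and $t = 1$ are constant, hence contribute nothing once the fixed endpoints $x_0,x_1$ are built into the path space). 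This yields $d_\Omega\langle T_\omega, g\rangle = \langle T_\omega,\partial g\rangle = m - n$, which is exactly the defining condition of $\mathcal{C}_2(C)(m,n)$.

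Next I would treat the two compositions and the identities. For vertical composition, Lemma \ref{lemvert2tracks} gives $\langle T_\omega, g\cdot h\rangle = \langle T_\omega, g\rangle + \langle T_\omega, h\rangle$, which is precisely the rule $(M,N)\mapsto M+N$ of $\mathcal{C}_2(C)$; the identity 2-morphism on a 1-track $[\gamma]$ is the class of the constant 2-path $c_\gamma$ at $\gamma$, and since this plot is independent of the $s$-variable its iterated integrals carry no $ds$-component, so $\langle T_\omega, c_\gamma\rangle = 0$, the additive unit. For horizontal composition, setting $M = \langle T_\omega, g\rangle$, $N = \langle T_\omega, h\rangle$, $p = \langle T_\omega, h(0,-)\rangle$, and $n = \langle T_\omega, g(1,-)\rangle$, Lemma \ref{lemhoriz2tracks} gives $\langle T_\omega, g\circ h\rangle = Mp + nN$, matching the horizontal composition of $\mathcal{C}_2(C)$; its second formula realizes the alternate expression $mN + Mq$, so the two agree modulo $d_\Omega(\widehat{\Omega^1(C)}\otimes\widehat{\Omega^1(C)})$, consistently with $d_\Omega M = m-n$ and $d_\Omega N = p-q$.

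The hardest part will be the source/target computation of the second paragraph: one must confirm that the twisted cochain condition converts the de Rham differential acting on the iterated integrals comprising $T_\omega$ into $d_\Omega$ on the cobar side \emph{with the correct sign}, and that the degenerate $t$-faces of the square genuinely drop out, so that $d_\Omega\langle T_\omega, g\rangle$ equals $m-n$ on the nose rather than merely lying in the right filtration. Once this is secured, preservation of both compositions and of identities furnishes a strict 2-functor; the interchange law requires no separate verification, since it holds in both strict 2-categories $\mathcal{P}_2(M)$ and $\mathcal{C}_2(C)$ and is automatically respected by any assignment compatible with the two compositions.
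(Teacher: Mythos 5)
Your proposal is correct and takes essentially the same route as the paper, which likewise obtains the theorem by assembling Lemma \ref{lem2tracks} (well-definedness on 2-tracks), Lemma \ref{lemvert2tracks} (vertical composition), and Lemma \ref{lemhoriz2tracks} (horizontal composition). The additional checks you spell out --- the source/target identity $d_\Omega \langle T_\omega, g\rangle = m - n$ via Stokes' theorem and the twisted cochain condition, the vanishing $\langle T_\omega, c_\gamma\rangle = 0$ for identity 2-morphisms, and the automatic interchange law --- are left implicit in the paper, so your write-up is, if anything, more complete.
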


\subsection{Integration over laminated 2-tracks}
In this section, we extend the notion of integration over~2-paths to that of integration over laminated 2-tracks. Moreover, we show that the integration of the transport $T_\omega$ of a formal power series connection $\omega$ over laminated 2-tracks is compatible with the vertical composition and the horizontal compositions of laminated 2-tracks. 

\begin{lem}\label{lemlam2tracks}
If $g$ and $h$ are laminated rank-2 homotopic 2-paths, then 
\begin{equation*}
\langle T_\omega, g\rangle = \langle T_\omega, h\rangle. 
\end{equation*}
\end{lem}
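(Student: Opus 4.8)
The plan is to reduce the equality to the vanishing of the transport integrated over a homotopy, and then to establish that vanishing by extracting the top-degree coefficient of the integrand and exploiting the three lamination conditions.

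First I would choose a laminated rank-2 homotopy $H$ from $g$ to $h$. Since $H$ is in particular a rank-2 homotopy, the computation in the proof of Lemma~\ref{lem2tracks} — Stokes' theorem followed by the twisted cochain condition — applies verbatim and yields
\[
\langle T_\omega, g\rangle - \langle T_\omega, h\rangle = d_\Omega\langle T_\omega, H\rangle.
\]
It therefore suffices to prove that $\langle T_\omega, H\rangle = 0$, that is, that each iterated integral $\langle \int \omega_{i_1}\ldots\omega_{i_k}, H\rangle$ occurring in $\langle T_\omega, H\rangle$ vanishes.

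Next I would analyse one such integral. Writing $\widetilde H(t_1,\ldots,t_k,r,s) = (H(r,s,t_1),\ldots,H(r,s,t_k))$, each factor $\widetilde H^*\pi_j^*\omega_{i_j}$ depends only on $H(r,s,t_j)$, hence lies in the exterior algebra generated by $dt_j$, $dr$, $ds$. Integrating over $\Delta^k\times[0,1]^2$ amounts to reading off the coefficient of $dt_1\wedge\cdots\wedge dt_k\wedge dr\wedge ds$ in the product of these factors. Because $dt_j$ can only be supplied by the $j$-th factor and $\sum_j\deg\omega_{i_j} = k+2$, a short count shows that only two configurations contribute: (A) one factor of degree $3$ supplying $dt_j\wedge dr\wedge ds$, and (B) exactly two factors of degree $2$, one supplying $dt_p\wedge dr$ and the other $dt_q\wedge ds$. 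In configuration (A) the coefficient equals $\omega_{i_j}(\partial_tH,\partial_rH,\partial_sH)$, which already vanishes because $H$ is a rank-2 homotopy: the vectors $\partial_rH,\partial_sH,\partial_tH$ span at most two dimensions at every point.

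Configuration (B) is the heart of the matter, and the step I expect to be the main obstacle, for it is exactly here that a plain rank-2 homotopy is not enough (compare the Remark after Lemma~\ref{lem2tracks}). Each degree-$2$ factor enters through one of the numbers $\omega_{i_p}(\partial_tH,\partial_rH)$ or $\omega_{i_p}(\partial_tH,\partial_sH)$, evaluated at the relevant simplex coordinate, and the coefficient of the top form is, up to an overall sign,
\[
\omega_{i_p}(\partial_tH,\partial_sH)\,\omega_{i_q}(\partial_tH,\partial_rH) - \omega_{i_p}(\partial_tH,\partial_rH)\,\omega_{i_q}(\partial_tH,\partial_sH).
\]
Fixing $(r,s)$, I would invoke whichever lamination condition holds there: condition (i) makes $\partial_rH$ proportional to $\partial_tH$ for every $t$, so each factor $\omega(\partial_tH,\partial_rH)$ is zero; condition (ii) is symmetric and annihilates each $\omega(\partial_tH,\partial_sH)$; and condition (iii) gives $\partial_rH = c\,\partial_sH$ with $c$ independent of $t$, whence the two products coincide and cancel. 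These vanishings persist at degenerate points where some $\partial_tH$ is zero, since there $\omega(\partial_tH,-)$ vanishes outright; this makes the argument insensitive to the fact that different lamination conditions may govern different overlapping regions of $[0,1]^2$. Thus the integrand vanishes identically, every iterated integral over $H$ is zero, so $\langle T_\omega, H\rangle = 0$, and the lemma follows.
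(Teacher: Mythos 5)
Your proposal is correct, and its skeleton matches the paper's: choose a laminated rank-2 homotopy $H$, use Stokes' theorem as in Lemma~\ref{lem2tracks} to reduce the lemma to the vanishing of every iterated integral of the relevant degree paired with $H$. Where you diverge is in how that vanishing is established. The paper disposes of it in one line: laminatedness forces the differential of the evaluation map $\widetilde H\colon [0,1]^2\times\Delta^k\to M^k$ to have rank at most $k+1$, so the pullback of the $(k+2)$-form $\pi_1^*\omega_1\wedge\cdots\wedge\pi_k^*\omega_k$ vanishes identically; notably, this rank assertion is stated without proof. You instead expand the pullback, observe that only two configurations can contribute to the coefficient of $dt_1\wedge\cdots\wedge dt_k\wedge dr\wedge ds$ (one degree-3 factor, or two degree-2 factors supplying $dr$ and $ds$ separately), and kill each pointwise: configuration (A) by the rank-2 property alone, configuration (B) by a case split over the three lamination conditions, where the $t$-independence of the constants in condition (iii) does the essential work (this is exactly the point where plain rank-2 homotopy fails, consistent with the Remark after Lemma~\ref{lem2tracks}). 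Your cancellation in (B) is in effect an unpacking, at the level of the specific integrand, of the linear dependence underlying the paper's rank claim, so the two arguments buy comparable generality; yours is longer but more informative, since it supplies the justification the paper omits, including the degenerate points where $\partial_t H$ vanishes. One trivial case your wording skips: under condition (iii) with the $\partial_r H$-coefficient equal to zero one cannot write $\partial_r H = c\,\partial_s H$, but then $\partial_s H\equiv 0$ and both products vanish outright, just as in your treatment of (i) and (ii). Finally, a cosmetic difference: you reduce to $\langle T_\omega,H\rangle=0$ and then apply $d_\Omega$, while the paper shows $\langle dT_\omega,H\rangle=0$ directly; both rest on the same vanishing statement.
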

\begin{proof}
Let $H$ denote a laminated rank-2 homotopy from $g$ to $h$.  We have
\begin{align*}
\langle T_\omega, h\rangle - \langle T_\omega, g\rangle 
&= \langle T_\omega, \partial H\rangle, \\
&= \langle dT_\omega, H\rangle,
\end{align*}
where the second equality is consequence of Stokes' theorem.  Therefore, it suffices to show that for any positive integer $k$ and for any differential forms $\omega_1, \ldots, \omega_k$ of positive degree on $M$ such that $$\deg(\omega_1) + \cdots + \deg(\omega_k) = k + 2,$$ we have $\displaystyle \langle \int \omega_1\ldots \omega_k , H\rangle = 0$.  Recall that 
\begin{align*}
\langle \int \omega_1\ldots\omega_k , H\rangle 
&= \int_{[0,1]^2}\int_{\Delta^k}\widetilde{H}^*(\omega_1\times \cdots \times \omega_k),
\end{align*}
where $\widetilde{H}(r,s,t_1, \ldots, t_k) = (H(r,s,t_1), \ldots, H(r,s,t_k))$ for all $(r,s)\in [0,1]^2$ and $(t_1, \ldots, t_k) \in \Delta^k$. The condition that $H$ is laminated ensures that the rank of $d\widetilde{H}$ is at most $k+1$, and therefore 
\begin{equation*}
\widetilde{H}^*(\omega_1\times \cdots\times \omega_k) = 0.\qedhere
\end{equation*}

\end{proof}

The left and right whiskering of 2-paths with 1-paths is compatible with integration:
\begin{lem}\label{lemw2tracks}
Let $M$ be a smooth manifold and let $\omega_1, \ldots \omega_k$ be differential forms on $M$. If $\gamma$ is a 1-path and $g$ is a 2-path such that $\gamma(1) = g([0,1]\times \{0\})$, then
\begin{equation*}
\langle T_\omega, \gamma\circ g \rangle = \langle T_\omega, \gamma\rangle \langle T_\omega, g\rangle.
\end{equation*}
If $\gamma$ is a 1-path and $g$ is a 2-path such that $g([0,1]\times \{1\}) = \gamma(0)$, then
\begin{equation*}
\langle T_\omega, g \circ \gamma\rangle = \langle T_\omega, g\rangle\langle T_\omega, \gamma\rangle.
\end{equation*}
\end{lem}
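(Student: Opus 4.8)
The plan is to reproduce, one categorical level up, the proof that the transport is multiplicative under composition of $1$-tracks, $\langle T_\omega,\gamma\circ\delta\rangle=\langle T_\omega,\gamma\rangle\langle T_\omega,\delta\rangle$; the sole novelty is that one of the two composable factors is now a genuine $2$-path. Expanding $T_\omega$ and using bilinearity, it suffices to prove, for a single string of positive-degree forms, the splitting
\[
\langle\,\textstyle\int\omega_1\ldots\omega_k,\ \gamma\circ g\,\rangle=\sum_{j=0}^{k}\langle\,\textstyle\int\omega_1\ldots\omega_j,\ \gamma\,\rangle\,\langle\,\textstyle\int\omega_{j+1}\ldots\omega_k,\ g\,\rangle ,
\]
and then to observe that summing over the connection forms $\omega_{i_1},\ldots$ and weighting by the cobar products $c_{i_1}\cdots c_{i_k}$ turns the cut at position $j$ into the product $\langle T_\omega,\gamma\rangle\,\langle T_\omega,g\rangle$.

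First I would record that, for each fixed value of the plot parameter, $\gamma\circ g$ is the concatenation of $\gamma$ (reparametrised onto the first half of the path interval) followed by the corresponding path of $g$ (reparametrised onto the second half). The displayed identity is then precisely Chen's formula for iterated integrals over a concatenation, \cite[Proposition 2.1.1]{chen1973}, which is already the tool used in Lemma \ref{lemhoriz2tracks}: subdividing the simplex $\Delta^k$ according to how many of the ordered parameters $t_1\leqslant\cdots\leqslant t_k$ land in the $\gamma$-half versus the $g$-half produces exactly the $k+1$ summands indexed by $j$. The decisive point is that the $\gamma$-half is independent of the plot parameter, so the forms $\omega_1,\ldots,\omega_j$ placed over $\gamma$ contribute no differential in the plot direction; this factor therefore vanishes unless $\deg\omega_1+\cdots+\deg\omega_j-j=0$, in which case it is the scalar $\langle\int\omega_1\ldots\omega_j,\gamma\rangle$, an element of $\widehat{\Omega^0(C)}$. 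The entire one-dimensional fibre integration demanded by $\dim U=1$ is thus carried by the complementary forms over $g$, producing $\langle\int\omega_{j+1}\ldots\omega_k,g\rangle\in\widehat{\Omega^1(C)}$ after the change of variables on the second half of the interval and integration over the plot parameter. Reassembling, the terms with cut at $j$ factor as the product of the length-$j$ part of $\langle T_\omega,\gamma\rangle$ with the length-$(k-j)$ part of $\langle T_\omega,g\rangle$, giving $\langle T_\omega,\gamma\rangle\langle T_\omega,g\rangle$. The right whiskering $g\circ\gamma$ is identical with the roles of the two halves exchanged, so the cut leaves the $g$-factor on the left and yields $\langle T_\omega,g\rangle\langle T_\omega,\gamma\rangle$.

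I expect the main obstacle to be the fibre-integration bookkeeping over $\Delta^k\times[0,1]$: one must check carefully that the plot-independent $\gamma$-half never supplies the distinguished direction of integration, that no cross terms survive, and that the reparametrisations built into the definition of the whiskering are matched to the integration convention for $2$-paths (plot parameter against path parameter) with the correct orientations and signs. All of this is governed by the degree conditions (degree $0$ on the $\gamma$-factor, degree $1$ on the $g$-factor) once they are lined up with the grading of the cobar complex. As a sanity check and an alternative route, one may view $\gamma\circ g$ as the horizontal composite of the constant $2$-path at $\gamma$ with $g$ and invoke Lemma \ref{lemhoriz2tracks}; since a constant $2$-path integrates to $0$, one of the two summands there drops out together with the $d_\Omega$-indeterminacy, which upgrades that congruence to the exact equality asserted here.
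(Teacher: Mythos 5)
Your main argument is correct and coincides with the paper's proof, which disposes of this lemma in one line as a direct consequence of Chen's concatenation formula for iterated integrals \cite[Proposition 2.1.2]{chen1973}; the cut formula plus degree counting that you spell out (only the cuts $j$ with $\deg\omega_1+\cdots+\deg\omega_j-j=0$ survive on the plot-independent $\gamma$-half, collapsing the sum to the single product $\langle T_\omega,\gamma\rangle\langle T_\omega,g\rangle$) is exactly what that citation encapsulates. One caveat: your closing ``alternative route'' via Lemma~\ref{lemhoriz2tracks} does not actually work, since that lemma is only an identity modulo $d_\Omega\left(\widehat{\Omega^1(C)}\otimes\widehat{\Omega^1(C)}\right)$ and the vanishing of one summand does not by itself remove that indeterminacy --- but since you offer it only as a sanity check, this does not affect the validity of your primary argument.
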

\begin{proof}
This is a direction consequence of \cite[Proposition 2.1.2]{chen1973}.
\end{proof}

As a consequence of Lemmas \ref{lemvert2tracks} and \ref{lemw2tracks}, we get:
\begin{lem}\label{lemhoriz2paths}
If $h_1$ and $h_2$ are 2-paths such that $h_1([0,1]\times \{1\}) = h_2([0,1]\times \{0\})$, then
\begin{align*}
\left\langle T_\omega, \begin{pmatrix}
& h_2\\
h_1 & 
\end{pmatrix}\right\rangle &= \langle T_\omega, h_1\rangle \langle T_\omega, h_2(0,-)\rangle + \langle T_\omega h_1(1,-)\rangle \langle T_\omega, h_2\rangle, \\
\left\langle T_\omega, \begin{pmatrix}
h_1& \\
& h_2
\end{pmatrix}\right\rangle & = \langle T_\omega, h_1(0,-)\rangle \langle T_\omega, h_2\rangle + \langle T_\omega, h_1\rangle \langle T_\omega, h_2(1,-)\rangle.
\end{align*}
\end{lem}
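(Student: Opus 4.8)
The plan is to reduce both identities to the defining decompositions of the two horizontal composites, each of which expresses the composite as a vertical composition of a right whiskering and a left whiskering:
$$\begin{pmatrix} & h_2\\ h_1 & \end{pmatrix} = (h_1\circ h_2(0,-))\cdot (h_1(1,-)\circ h_2), \qquad \begin{pmatrix} h_1 & \\ & h_2\end{pmatrix} = (h_1(0,-)\circ h_2)\cdot (h_1\circ h_2(1,-)).$$
Since the two cases are entirely parallel, I would carry out the argument for the first composite in full and then indicate that the second follows by the identical chain of reductions.

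First I would check that every composite appearing on the right is well defined. The right whiskering $h_1\circ h_2(0,-)$ requires $h_1([0,1]\times\{1\}) = h_2(0,0)$, and the left whiskering $h_1(1,-)\circ h_2$ requires $h_1(1,1) = h_2([0,1]\times\{0\})$; both conditions follow from the hypothesis $h_1([0,1]\times\{1\}) = h_2([0,1]\times\{0\})$ together with the fact that, for a $2$-path, $h_1([0,1]\times\{1\})$ is a single point. One then checks that these two $2$-paths agree along the common $1$-path needed to form their vertical composition, so that Lemma \ref{lemvert2tracks} applies to the displayed decomposition.

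Next I would apply Lemma \ref{lemvert2tracks}, which is additive on vertical composites, to obtain
$$\left\langle T_\omega, \begin{pmatrix} & h_2\\ h_1 & \end{pmatrix}\right\rangle = \langle T_\omega, h_1\circ h_2(0,-)\rangle + \langle T_\omega, h_1(1,-)\circ h_2\rangle.$$
I would then invoke Lemma \ref{lemw2tracks}, which converts the integration of $T_\omega$ over a right (respectively left) whiskering into a product, turning the first summand into $\langle T_\omega, h_1\rangle\langle T_\omega, h_2(0,-)\rangle$ and the second into $\langle T_\omega, h_1(1,-)\rangle\langle T_\omega, h_2\rangle$. Summing yields the first asserted formula; the second formula follows in the same way from its decomposition, now using the left-whiskering case of Lemma \ref{lemw2tracks} for the first factor and the right-whiskering case for the second.

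The substance of this lemma is purely combinatorial: all the analytic content is already packaged in Lemmas \ref{lemvert2tracks} and \ref{lemw2tracks}. The only place that demands care is correctly matching each factor of the composite to the appropriate whiskering (left versus right) and to the correct boundary $1$-path $h_i(0,-)$ or $h_i(1,-)$, which determines the order of the product in $\widehat{\Omega^1(C)}$. I therefore expect the main obstacle to be the bookkeeping step of comparing the four piecewise domains in the explicit formulas for each composite against the whiskering decompositions, to confirm that the two descriptions coincide and that no factors are transposed.
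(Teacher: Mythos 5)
Your proposal is correct and follows exactly the paper's route: the paper derives this lemma precisely as a consequence of Lemma \ref{lemvert2tracks} and Lemma \ref{lemw2tracks}, applied to the defining decompositions of the two composites into vertical compositions of whiskerings. Your additional verification that the whiskering and vertical-composition conditions are satisfied (using that $h_1([0,1]\times\{1\})$ and $h_2([0,1]\times\{0\})$ are single points) is a sound elaboration of details the paper leaves implicit.
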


\subsection{Integration over 3-tracks}
In this section, we extend the notion of integration over good 3-paths to that of integration over 3-tracks. Moreover, we show that the integration of the transport $T_\omega$ of a formal power series connection $\omega$ is compatible with the upward composition, the vertical composition and the horizontal compositions of 3-tracks. 

We denote by $\widehat{\Omega^{1,2}(C)}$ the subspace of $\widehat{\Omega^3(C)}$ defined by
$$\widehat{\Omega^{1,2}(C)}=  \left(\widehat{\Omega^1(A^*)}\otimes \widehat{\Omega^2(C)}\right) +  \left(\widehat{\Omega^2(C)}\otimes \widehat{\Omega^1(C)} \right)$$ 
\begin{lem}\label{lem3tracks}
If $J$ and $J'$ are rank-3 homotopic good 3-paths, then
\begin{equation*}
\langle T_\omega, J'\rangle - \langle T_\omega, J\rangle \in d_\Omega \widehat{\Omega^{1,2}(C)}.
\end{equation*}
\end{lem}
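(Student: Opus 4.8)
The plan is to follow the template established in the proofs of Lemmas \ref{lem2tracks} and \ref{lemlam2tracks}. Let $W$ be a rank-3 homotopy from $J$ to $J'$, so that $W(0,-,-,-) = J$ and $W(1,-,-,-) = J'$, viewed as a plot over $U = [0,1]^3$ with coordinates $(q,r,s)$ (the path parameter being the fourth coordinate $t$). First I would apply Stokes' theorem and the twisted cochain condition exactly as in the earlier lemmas to obtain $\langle T_\omega, \partial W\rangle = \langle dT_\omega, W\rangle = d_\Omega \langle T_\omega, W\rangle$. The proof then splits into two essentially independent tasks: (i) showing that the boundary integral $\langle T_\omega, \partial W\rangle$ reduces to $\langle T_\omega, J'\rangle - \langle T_\omega, J\rangle$, and (ii) showing that $\langle T_\omega, W\rangle$ lands in the subspace $\widehat{\Omega^{1,2}(C)}$. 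Combining these gives $\langle T_\omega, J'\rangle - \langle T_\omega, J\rangle = d_\Omega\langle T_\omega, W\rangle \in d_\Omega \widehat{\Omega^{1,2}(C)}$, as required.

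For task (i), the boundary $\partial U$ of the cube has six two-dimensional faces. The faces $q = 0$ and $q = 1$ contribute $\langle T_\omega, J\rangle$ and $\langle T_\omega, J'\rangle$ with opposite signs, and I would argue that the four remaining faces contribute nothing. The faces $r = 0$ and $r = 1$ are, by definition, laminated rank-2 homotopies, so the integration of the relevant degree-$2$ iterated integrals over them vanishes by the argument in the proof of Lemma \ref{lemlam2tracks}. The faces $s = 0$ and $s = 1$ are independent of $r$, so the pullback $\widetilde{W}^*$ along them carries no $dr$ component; since the top form on such a two-dimensional face is proportional to $dq\wedge dr$, its integral vanishes. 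This is precisely where the structural conditions in the definition of a rank-3 homotopy are used.

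For task (ii), which is the technical heart, I would run the degree-rank argument from the end of the proof of Lemma \ref{lem2tracks}, now one dimension higher. Writing $\rho_j(t_1,\ldots,t_k,q,r,s) = (q,r,s,t_j)$, each factor of $\widetilde{W}^*(\pi_1^*\omega_1 \wedge \cdots \wedge \pi_k^*\omega_k)$ equals $(W\circ\rho_j)^*\omega_j$ and involves only the differentials $dt_j, dq, dr, ds$. For the integral over $\Delta^k \times U$ to be nonzero, the pullback must be the top form $dt_1\wedge\cdots\wedge dt_k\wedge dq\wedge dr\wedge ds$, so each factor must supply its own $dt_j$ together with some of $dq, dr, ds$. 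The rank-3 condition forces the pullback along $W$ of any $4$-form to vanish, so no single factor can supply all of $dq, dr, ds$ together with its $dt_j$; hence the three differentials $dq, dr, ds$ are distributed among the factors either as $(1,1,1)$, giving three factors of degree $2$, that is three cobar slots of degree $1$, or as $(2,1)$, giving one factor of degree $3$ and one of degree $2$, that is one cobar slot of degree $2$ and one of degree $1$. In either case the resulting word $[c_{i_1}\vert\cdots\vert c_{i_k}]$ has its positive-degree slots placed so that, splitting by concatenation at an appropriate position, it becomes a product of a degree-$1$ element and a degree-$2$ element; thus it lies in $(\widehat{\Omega^1(C)}\otimes\widehat{\Omega^2(C)}) + (\widehat{\Omega^2(C)}\otimes\widehat{\Omega^1(C)}) = \widehat{\Omega^{1,2}(C)}$.

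The main obstacle will be keeping the bookkeeping of task (ii) honest: verifying that the rank-3 hypothesis excludes exactly the degree-$4$ per-factor contributions and nothing more, and confirming that both surviving distributions $(1,1,1)$ and $(2,1)$ concatenate into $\widehat{\Omega^{1,2}(C)}$. The appearance of the new $(2,1)$ pattern, absent in the rank-2 case where only $(1,1)$ survives, is exactly what forces the target to be $\widehat{\Omega^{1,2}(C)}$ rather than a triple tensor product; checking the signs in the boundary reduction of task (i) is the other place where care is needed.
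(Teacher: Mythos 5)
Your proposal is correct and follows essentially the same route as the paper: choose a rank-3 homotopy $W$, apply Stokes' theorem and the twisted cochain condition to obtain $\langle T_\omega, J'\rangle - \langle T_\omega, J\rangle = d_\Omega\langle T_\omega, W\rangle$, and use the rank-3 hypothesis to place $\langle T_\omega, W\rangle$ in $\widehat{\Omega^{1,2}(C)}$. Your tasks (i) and (ii) simply make explicit the boundary-face analysis and the degree bookkeeping that the paper's one-line assertions (``$\langle T_\omega, \partial W\rangle = \langle T_\omega, J'\rangle - \langle T_\omega, J\rangle$'' and ``since $W$ is rank-3, $\langle T_\omega, W\rangle \in \widehat{\Omega^{1,2}(C)}$'') leave implicit.
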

\begin{proof}
Let $W$ be a rank-3 homotopy from $J$ to $J'$.  We have
\begin{align*}
\langle T_\omega, J'\rangle - \langle T_\omega, J\rangle = \langle T_\omega, \partial W\rangle = \langle dT_\omega, W\rangle = d_\Omega\langle T_\omega, W\rangle,
\end{align*}
where the second equality is a consequence of Stokes' theorem, and the third equality is a consequence of the twisted cochain condition. Since $W$ is rank-3, $\langle T_\omega, W\rangle \in \widehat{\Omega^{1,2}(C)}$.
\end{proof}

\begin{lem}\label{lemup3paths}
If $J$ and $J'$ are 3-paths such that $J(1, -,-) = J(0,-,-)$, then 
\begin{equation*}
\langle T_\omega,  J* J'\rangle = \langle T_\omega, J\rangle + \langle T_\omega, J'\rangle.
\end{equation*}
\end{lem}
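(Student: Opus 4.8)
The plan is to mirror the proof of Lemma~\ref{lemvert2tracks}, the only difference being that a $3$-path is integrated over a two-dimensional plot domain rather than a one-dimensional one. I regard a good $3$-path $J\colon[0,1]^3\to M$ as the plot $\phi_J\colon[0,1]^2\to P(M;x_0,x_1)$ given by $\phi_J(r,s)(t)=J(r,s,t)$, so that the plot domain is $U=[0,1]^2$ in the $(r,s)$-coordinates and $t$ is the path parameter. Since $T_\omega$ is a formal sum of iterated integrals and integration over a bounded plot is linear in the iterated integral, it suffices to prove the identity on each summand $\int\omega_1\ldots\omega_k$ with $\omega_1,\ldots,\omega_k$ of positive degree. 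By the definition in Section~\ref{integrationiteratedintegrals}, such a summand contributes to $\langle T_\omega,\,\cdot\,\rangle$ only when $\deg(\omega_1)+\cdots+\deg(\omega_k)-k=\dim U=2$; in every other degree both sides of the asserted equation vanish identically, so I may assume this condition.

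Next I unwind the definition and split the domain. For such forms,
$$
\langle \int\omega_1\ldots\omega_k,\ J*J'\rangle=\int_{[0,1]^2}\int_{\Delta^k}\widetilde{J*J'}^{\,*}\big(\pi_1^*\omega_1\wedge\cdots\wedge\pi_k^*\omega_k\big),
$$
where $\widetilde{J*J'}(r,s,t_1,\ldots,t_k)=\big((J*J')(r,s,t_1),\ldots,(J*J')(r,s,t_k)\big)$. Because the upward composition is defined by gluing along the first coordinate at $r=\tfrac12$, I break the outer integral over $[0,1]^2$ into the two regions $[0,\tfrac12]\times[0,1]$ and $[\tfrac12,1]\times[0,1]$, on which $J*J'$ coincides with $(r,s,t)\mapsto J(2r,s,t)$ and $(r,s,t)\mapsto J'(2r-1,s,t)$, respectively.

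The key step is the change of variables in the gluing coordinate $r$ on each region: the substitution $r\mapsto 2r$ rescales $[0,\tfrac12]\times[0,1]$ back to $[0,1]^2$, its Jacobian factor $2$ cancelling the halved length of the subinterval, and identifies the first region with $\langle\int\omega_1\ldots\omega_k, J\rangle$; likewise $r\mapsto 2r-1$ identifies the second region with $\langle\int\omega_1\ldots\omega_k, J'\rangle$. Summing over all iterated-integral summands of $T_\omega$ and reattaching the cobar coefficients $c_{i_1}\cdots c_{i_k}$ yields $\langle T_\omega, J*J'\rangle=\langle T_\omega,J\rangle+\langle T_\omega,J'\rangle$. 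I do not anticipate a real obstacle: the argument is the three-dimensional counterpart of Lemma~\ref{lemvert2tracks}, and the only point demanding care is to track the plot domain $U=[0,1]^2$ so that the degree count selects $\dim U=2$ and the splitting is carried out in the coordinate $r$ in which the upward composition is glued, rather than in the path parameter $t$.
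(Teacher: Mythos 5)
Your proof is correct and follows essentially the same route as the paper's: reduce to a single iterated-integral summand, split the plot domain $[0,1]^2$ along the gluing coordinate $r$ at $r=\tfrac12$, and rescale each half back to $[0,1]^2$ to recover $\langle\,\cdot\,,J\rangle$ and $\langle\,\cdot\,,J'\rangle$. You are in fact slightly more careful than the paper on one point: the relevant degree condition is $\deg(\omega_1)+\cdots+\deg(\omega_k)-k=2$ (the dimension of the plot domain), as you state, whereas the paper's proof writes $=1$, a typo carried over from Lemma~\ref{lemvert2tracks}.
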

\begin{proof}
For any differential forms $\omega_1, \ldots, \omega_k$ of positive degree such that $\deg(\omega_1)+\cdots +\deg(\omega_k) - k = 1$,  we have:

\begin{align*}
\langle\int \omega_1\ldots\omega_k, J*J'\rangle 
&= \int_{[0, \frac{1}{2}]\times [0,1]}\int_{\Delta^k}\widetilde{J*J'}^*(\omega_1\times\cdots\times \omega_k) + \int_{[\frac{1}{2}, 1]\times [0,1]}\int_{\Delta^k}\widetilde{J*J'}^*(\omega_1\times\cdots\times \omega_k),\\
&= \int_{[0, 1]\times [0,1]}\int_{\Delta^k}\widetilde{J}^*(\omega_1\times\cdots\times \omega_k) +  \int_{[0, 1]\times [0,1]}\int_{\Delta^k}\widetilde{J'}^*(\omega_1\times\cdots\times \omega_k),\\
&= \langle\int \omega_1\ldots\omega_k, J\rangle  + \langle\int \omega_1\ldots\omega_k, J'\rangle .\qedhere
\end{align*}
\end{proof}
By exchanging the first two coordinates in the previous proof, we get:
\begin{lem}\label{lemvert3paths}
If $J$ and $J'$ are 3-paths such that $J(-,1,-) = J'(-,0,-)$, then
$$\langle T_\omega,  J\cdot J'\rangle = \langle T_\omega, J\rangle + \langle T_\omega, J'\rangle. $$
\end{lem}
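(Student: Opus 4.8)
The plan is to run the computation in the proof of Lemma~\ref{lemup3paths} again, interchanging the first coordinate $r$ and the second coordinate $s$: the vertical composition glues along $s$ in exactly the way the upward composition glues along $r$. By linearity of the integration $\langle\,\cdot\,,\,\cdot\,\rangle$ over bounded plots and the definition of the transport $T_\omega$, it is enough to prove, for each iterated integral $\int\omega_1\ldots\omega_k$ occurring in $T_\omega$, the scalar identity
$$
\langle \int\omega_1\ldots\omega_k,\ J\cdot J'\rangle = \langle \int\omega_1\ldots\omega_k,\ J\rangle + \langle \int\omega_1\ldots\omega_k,\ J'\rangle,
$$
and then to multiply by $c_{i_1}\cdots c_{i_k}$ and sum. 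Observe that this identity is trivially $0 = 0 + 0$ whenever the degree of the iterated integral differs from $\dim[0,1]^2$, since all three plots have the same base dimension; so only the nonzero case requires work.

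First I would regard $J\cdot J'$ as a bounded plot of the pointed path space, so that its integration is an integral over the base square $[0,1]^2$ with coordinates $(r,s)$ and fiber $\Delta^k$ over the path parameters. The gluing hypothesis $J(-,1,-)=J'(-,0,-)$, together with the collar conditions in the definition of a $3$-path, ensures that $J\cdot J'$ is a smooth good $3$-path. I then split the $s$-integration at $s=\tfrac12$:
$$
\langle \int\omega_1\ldots\omega_k,\ J\cdot J'\rangle = \int_{[0,1]\times[0,\frac12]}\int_{\Delta^k}\widetilde{J\cdot J'}^{\,*}(\omega_1\times\cdots\times\omega_k) + \int_{[0,1]\times[\frac12,1]}\int_{\Delta^k}\widetilde{J\cdot J'}^{\,*}(\omega_1\times\cdots\times\omega_k).
$$
On the first region the definition of vertical composition reads $J\cdot J'(r,s,t)=J(r,2s,t)$, and on the second $J\cdot J'(r,s,t)=J'(r,2s-1,t)$. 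Applying the orientation-preserving affine substitutions $s\mapsto 2s$ and $s\mapsto 2s-1$, each half becomes an integral over the full square $[0,1]^2$, identifying the two summands with $\langle \int\omega_1\ldots\omega_k, J\rangle$ and $\langle \int\omega_1\ldots\omega_k, J'\rangle$, respectively.

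I do not anticipate a genuine obstacle; the argument is formally the same as for the upward composition. The one point that deserves a line of justification is that passing from a splitting in the first coordinate to one in the second introduces no orientation sign. This holds because the simplex $\Delta^k$ involves only the path variable $t$, so it is untouched by the splitting, while the base $[0,1]^2$ is symmetric in $r$ and $s$; moreover the substitutions in $s$ are monotone affine maps, so the change-of-variables formula for integrals of differential forms, $\int_D\sigma^*\eta=\int_{\sigma(D)}\eta$, applies on each half with no extra constant or sign, exactly as in the proof of Lemma~\ref{lemup3paths}.
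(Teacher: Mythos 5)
Your proposal is correct and is essentially the paper's own argument: the paper proves the upward-composition case (Lemma \ref{lemup3paths}) by splitting the base integral at $r=\tfrac12$ and reparametrizing each half, and then disposes of the vertical case exactly as you do, by repeating that computation with the roles of the first two coordinates exchanged. Your additional remarks (reduction to a single iterated integral by linearity, the vanishing in mismatched degrees, and the absence of orientation signs under the monotone affine substitutions) are accurate and consistent with the paper's treatment.
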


As a consequence of Lemma \ref{lemvert3paths} and \cite[Proposition 2.1.2]{chen1973}, we get:
\begin{lem}\label{lemhoriz3paths}
If $J$ and $J'$ are good 3-paths such that $J([0,1]^2\times \{1\}) = J'([0,1]^2\times \{0\})$, then
\begin{align*}
\left\langle T_\omega, \begin{pmatrix}
& J'\\
J & 
\end{pmatrix}\right\rangle &= \langle T_\omega, J\rangle \langle T_\omega, J'(0,0,-)\rangle + \langle T_\omega J(0,1,-)\rangle \langle T_\omega, J'\rangle,\\
\left\langle T_\omega, \begin{pmatrix}
J& \\
& J'
\end{pmatrix}\right\rangle &= \langle T_\omega, J(0,1,-)\rangle \langle T_\omega, J'\rangle + \langle T_\omega J\rangle \langle T_\omega, J'(0,0,-)\rangle.\\
\end{align*}
\end{lem}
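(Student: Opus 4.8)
The plan is to carry out the proof of Lemma \ref{lemhoriz2paths} one dimension higher, using the vertical composition of 3-paths in place of that of 2-paths and the whiskering of 3-paths by 1-paths in place of that of 2-paths. Following the pattern of the two-dimensional case (and as one reads off the piecewise formulas), each matrix-notation 3-path is a vertical composite (in the $s$-direction) of two whiskered 3-paths: the composite $\begin{pmatrix} & J'\\ J & \end{pmatrix}$ equals $(J\circ J'(0,0,-))\cdot(J(0,1,-)\circ J')$, where $J\circ J'(0,0,-)$ is the right whiskering of $J$ by the 1-path $J'(0,0,-)$ and $J(0,1,-)\circ J'$ is the left whiskering of $J'$ by the 1-path $J(0,1,-)$, and $\begin{pmatrix} J& \\ & J'\end{pmatrix}$ is the analogous composite built from the opposite pair of boundary 1-paths. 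The good-3-path hypothesis is exactly what renders these constructions meaningful: since $J(r,0,t)$ and $J(r,1,t)$ are independent of $r$, the restrictions $J(0,0,-)$, $J(0,1,-)$, $J'(0,0,-)$, $J'(0,1,-)$ are independent of the first coordinate and hence unambiguous as the boundary 1-paths along which we whisker.

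The one ingredient I would establish first is the three-dimensional analogue of Lemma \ref{lemw2tracks}: for a good 3-path $K$ and a 1-path $\gamma$ with matching endpoints, $\langle T_\omega, K\circ\gamma\rangle = \langle T_\omega, K\rangle\langle T_\omega, \gamma\rangle$ and $\langle T_\omega, \gamma\circ K\rangle = \langle T_\omega, \gamma\rangle\langle T_\omega, K\rangle$. This is a direct application of Chen's concatenation formula \cite[Proposition 2.1.2]{chen1973}: the plot attached to $K$ has base $[0,1]^2$, and over each base point the path of $K\circ\gamma$ is the concatenation of the path of $K$ with the fixed path $\gamma$, so the product formula for iterated integrals along a concatenation splits $\int\omega_{i_1}\cdots\omega_{i_k}$ as $\sum_p(\int\omega_{i_1}\cdots\omega_{i_p})(\int\omega_{i_{p+1}}\cdots\omega_{i_k})$. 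Because $\gamma$ is a plot over a point, its factor survives only in degree $0$, forcing the $K$-factor to absorb the entire base-degree $2$; using that $\deg\omega_i = \deg c_i$ for the formal power series connection, the surviving terms reassemble precisely into the product $\langle T_\omega, K\rangle\langle T_\omega, \gamma\rangle \in \widehat{\Omega^2(C)}\cdot\widehat{\Omega^0(C)}\subset\widehat{\Omega^2(C)}$.

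Granting this, the two displayed formulas drop out. Applying Lemma \ref{lemvert3paths} to the vertical decomposition of $\begin{pmatrix} & J'\\ J & \end{pmatrix}$ converts the integration into the sum of the integrations over the two whiskered pieces, and the whiskering identity above rewrites each piece as a product of integrals, yielding the first equation; the second follows identically from the decomposition of $\begin{pmatrix} J& \\ & J'\end{pmatrix}$. The step demanding the most care is the interface between the two constructions: before invoking Lemma \ref{lemvert3paths} one must check that the two whiskered pieces genuinely form a legitimate vertical composite, i.e. that the $s=1$ face of the lower piece coincides on the nose with the $s=0$ face of the upper piece. This is precisely where the good-3-path conditions enter, since they pin the common intermediate face down to a single concatenated 1-path independent of the first coordinate; once this matching is verified, the remainder is a formal manipulation parallel to Lemma \ref{lemhoriz2paths}.
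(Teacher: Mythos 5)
Your proposal is correct and takes essentially the same route as the paper, whose entire (unwritten) proof is the remark that the lemma is ``a consequence of Lemma \ref{lemvert3paths} and Chen's Proposition 2.1.2'': that is, exactly your decomposition of each matrix composite into a vertical composite of whiskered 3-paths, with your intermediate whiskering identity being the 3-path analogue of Lemma \ref{lemw2tracks} and following from the same citation. One remark: carried out as you describe, the second decomposition yields $\langle T_\omega, J(0,0,-)\rangle\langle T_\omega, J'\rangle + \langle T_\omega, J\rangle\langle T_\omega, J'(0,1,-)\rangle$, consistent with the index pattern of Lemma \ref{lemhoriz2paths}, so the placement of $J(0,1,-)$ and $J'(0,0,-)$ in the paper's second displayed equation appears to be a typographical slip rather than a defect in your argument.
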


\subsection{The target Gray 3-category}

Recall that $\widehat{\Omega(C)}$ denotes the completion of the cobar complex of~$C$, and that $\widehat{\Omega^i(C)}$ is the vector subspace generated by homogeneous elements of degree $i$ (see Section~\ref{formalpowerseriesconnections}). We define the Gray 3-category $\mathcal{C}_3(C)$ by the following data:
\begin{itemize}
\item the Gray 3-category $\mathcal{C}_3(C)$ has a single object $*$,
\item an endomorphism of $*$ is an element of $\widehat{\Omega^0(C)} $ with composition given by the product,
\item for 1-morphisms $m$ and $n$ of $\mathcal{C}_3(C)$,  a 2-morphism from $m$ to $n$ is an element of the set
$$\mathcal{C}_3(C)(m,n) = \left\lbrace M\in \widehat{\Omega^1(C)}\mid d_\Omega M = m-n \right\rbrace, $$
\item the vertical composition of 2-morphisms is given by
\[
\begin{array}{ccl}
\mathcal{C}_3(C)(m,n) \times \mathcal{C}_3(C)(n,p) & \to & \mathcal{C}_3(C)(m,p)\\
(M,N) & \mapsto & M + N,
\end{array}
\]
\item the left whiskering of a 2-morphism $M$ with a 1-morphism $m$ is given by $mM$,
\item the right whiskering of a 2-morphism $M$ with a 1-morphism $m$ is given by $Mm$,
\item for 2-morphisms $M,N \in \mathcal{C}_3(C)(m,n)$, a 3-morphism from $M$ to $N$ is an element of the set
$$
\mathcal{C}_3(C)(m,n)(M,N) = \left\lbrace \alpha \in  \widehat{\Omega^2(C)}/d_\Omega\widehat{\Omega^{1,2}(C)} \mid d_\Omega \alpha = M-N \right\rbrace,
$$
\item the upward composition of 3-morphisms is given by
\[
\begin{array}{ccl}
\mathcal{C}_3(C)(m,n)(M,N) \times \mathcal{C}_3(C)(m,n)(N,P) & \to & \mathcal{C}_3(C)(m,n)(M,P)\\
(\alpha, \beta) & \mapsto & \alpha + \beta,
\end{array}
\]
\item the vertical composition of 3-morphisms is given by
\[
\begin{array}{ccl}
\mathcal{C}_3(C)(m,n)(M,N) \times \mathcal{C}_3(C)(n,p)(P,Q) & \to & \mathcal{C}_3(C)(m,p)(M\cdot P,N\cdot Q)\\
(\alpha, \beta) & \mapsto & \alpha + \beta,
\end{array}
\]
\item the left whiskering of a 3-morphism $\alpha$ with a 1-morphism $m$ is given by $m\alpha$,
\item the right whiskering of a 3-morphism $\alpha$ with a 1-morphism $m$ is given by $\alpha m$.
\end{itemize}
It is a straightforward verification that $\mathcal{C}_3(C)$ is a Gray 3-category.

\subsection{Holonomy 3-functors}

Recall from Section \ref{formalpowerseriesconnections} the transport $T_\omega$ of the formal power series connection $\omega$, and from Section \ref{iteratedintegrals} the integration $\langle \cdot, \cdot \rangle$ of iterated integrals over bounded plots.

Let's define the association $\mathrm{Hol}^3_\omega$. For any 1-track $[\gamma]$, we set
$$\mathrm{Hol}^3_\omega([\gamma]) = \langle T_\omega, \gamma\rangle.$$
For any laminated 2-track $[g]$, we set 
$$\mathrm{Hol}^3_\omega([g]) = \langle T_\omega, g\rangle.$$
We deduce from Lemma \ref{lemlam2tracks} that the element $\langle T_\omega, g\rangle \in \widehat{\Omega^1(C)}$ doesn't depend on the choice of representative 2-path $g$.
For any 3-track $[J]$, we set 
$$\mathrm{Hol}^3_\omega([J]) = [\langle T_\omega, J\rangle],$$
where $[\langle T_\omega, J\rangle]$ denotes the class of $\langle T_\omega, J\rangle$ in $\widehat{\Omega^2(C)}/d_\Omega\widehat{\Omega^{1,2}(C)}$. We deduce from Lemma \ref{lem3tracks} that the class $[\langle T_\omega, J\rangle]$ doesn't depend on the choice of representative good 3-path $J$. 

From Lemmas \ref{lemvert2tracks} and \ref{lemhoriz2paths},  we deduce that the association $\mathrm{Hol}^2_\omega$ is compatible with the vertical composition and the horizontal compositions of laminated 2-tracks. From Lemmas \ref{lemup3paths}, \ref{lemvert3paths} and  \ref{lemhoriz3paths}, we deduce 
that the association $\mathrm{Hol}^3_\omega$ is compatible with the upward composition, the vertical composition, and the horizontal compositions of 3-tracks:

\begin{thm}\label{thm3holonomy}
The transport of the formal power series $\omega$ induces a Gray functor:
$$
 \mathrm{Hol}^3_\omega \colon \mathcal{P}_3(M) \to \mathcal{C}_3(C)
$$
called the \emph{holonomy 3-functor of $\omega$}.
\end{thm}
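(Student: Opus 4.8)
The plan is to check, level by level, that $\mathrm{Hol}^3_\omega$ is well defined, that it lands in the correct hom-sets of $\mathcal{C}_3(C)$, that it strictly preserves every operation that is strict on both sides, and that it carries the Gray interchanger of $\mathcal{P}_3(M)$ to that of $\mathcal{C}_3(C)$. Well-definedness on each kind of cell is already in hand: the value on a $1$-track does not depend on the representative by the rank-$1$ homotopy invariance recalled at the start of the section, the value on a laminated $2$-track by Lemma \ref{lemlam2tracks}, and the class of the value on a $3$-track by Lemma \ref{lem3tracks}. It then remains to verify the source and target conditions. For a laminated $2$-track from $[\gamma]$ to $[\gamma']$ with representative $g$, I would run the computation in the proof of Lemma \ref{lem2tracks} — Stokes' theorem followed by the twisted cochain condition — for the one-parameter family $s\mapsto g(s,-)$, whose boundary in the homotopy parameter is $g(0,-)=\gamma$ and $g(1,-)=\gamma'$; this yields $d_\Omega\langle T_\omega, g\rangle = \langle T_\omega,\gamma\rangle - \langle T_\omega,\gamma'\rangle$, which is exactly the defining relation of an element of $\mathcal{C}_3(C)(\langle T_\omega,\gamma\rangle,\langle T_\omega,\gamma'\rangle)$. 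The analogous boundary computation for a good $3$-path $J$ with $[J(0,-,-)]=[g]$ and $[J(1,-,-)]=[h]$ gives $d_\Omega\langle T_\omega, J\rangle = \langle T_\omega, g\rangle - \langle T_\omega, h\rangle$, so $[\langle T_\omega, J\rangle]$ is a $3$-morphism from $\langle T_\omega,g\rangle$ to $\langle T_\omega,h\rangle$.

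Next I would verify compatibility with the strict operations. Composition of $1$-morphisms is the multiplicativity $\langle T_\omega,\gamma\circ\delta\rangle = \langle T_\omega,\gamma\rangle\langle T_\omega,\delta\rangle$ recorded in the holonomy functor subsection. Vertical composition of $2$-morphisms is additivity (Lemma \ref{lemvert2tracks}), and the left and right whiskerings of a $2$-morphism by a $1$-morphism are the multiplications $mM$ and $Mm$ supplied by Lemma \ref{lemw2tracks}. The upward and vertical compositions of $3$-morphisms are additivity, given by Lemmas \ref{lemup3paths} and \ref{lemvert3paths}, while the whiskerings of a $3$-morphism by a $1$-track follow from \cite[Proposition 2.1.2]{chen1973} exactly as in Lemma \ref{lemw2tracks}. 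Each of these equalities holds on the nose in the appropriate completed cobar space and therefore descends to the relevant quotient. Identities are preserved as well: the constant $1$-track integrates to the unit $1\in\widehat{\Omega^0(C)}$, and a cell that is constant in its homotopy parameter carries no differential in that parameter and hence integrates to $0$, which is the identity for the additive vertical and upward compositions of higher cells.

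The one genuinely $3$-categorical point, and the \emph{main obstacle}, is the Gray interchanger. Because $\mathcal{C}_3(C)$ is a Gray category rather than a strict $3$-category, the two whiskering-built horizontal composites of a pair of $2$-morphisms are not equal but differ by an invertible $3$-morphism, and a Gray functor must send the interchanger of $\mathcal{P}_3(M)$ to this $3$-morphism. Write $M_i = \langle T_\omega, h_i\rangle$, with $M_1$ going from $m=\langle T_\omega,h_1(0,-)\rangle$ to $m'=\langle T_\omega,h_1(1,-)\rangle$ and $M_2$ from $n=\langle T_\omega,h_2(0,-)\rangle$ to $n'=\langle T_\omega,h_2(1,-)\rangle$. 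By Lemma \ref{lemhoriz2paths} the two composites are $M_1 n + m' M_2$ and $m M_2 + M_1 n'$, and substituting $d_\Omega M_1 = m - m'$ and $d_\Omega M_2 = n - n'$ shows their difference to be $M_1\,d_\Omega M_2 - (d_\Omega M_1)\,M_2$. Since $d_\Omega$ is a derivation of degree $-1$ and $\deg M_1 = 1$, this difference equals $d_\Omega(-M_1 M_2)$, so the target interchanger is represented by $-\langle T_\omega,h_1\rangle\langle T_\omega,h_2\rangle \in \widehat{\Omega^1(C)}\otimes\widehat{\Omega^1(C)}\subset\widehat{\Omega^2(C)}$.

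The hard part, then, is to match this with the source interchanger: one must take the explicit good $3$-path in $\mathcal{P}_3(M)$ (following Faria Martins and Picken \cite{faria_martins_fundamental_2011}) that realises the interchanger $3$-track between the two horizontal composites, and compute, via the boundary formula of Lemma \ref{lem3tracks}, that its holonomy represents exactly the class of $-\langle T_\omega,h_1\rangle\langle T_\omega,h_2\rangle$ modulo $d_\Omega\widehat{\Omega^{1,2}(C)}$. This cannot be read off from the source and target alone, since a $3$-morphism is not determined by its boundary, so it requires an honest integration over the interchanger $3$-path. Once this identity is established, all the Gray functor axioms hold and $\mathrm{Hol}^3_\omega$ is a Gray functor.
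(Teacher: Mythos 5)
Your proposal retraces the paper's own proof on every point the paper actually argues: well-definedness of $\mathrm{Hol}^3_\omega$ at the three levels via rank-1 invariance, Lemma \ref{lemlam2tracks} and Lemma \ref{lem3tracks}, and compatibility with the operations via Lemmas \ref{lemvert2tracks}, \ref{lemw2tracks}, \ref{lemhoriz2paths}, \ref{lemup3paths}, \ref{lemvert3paths} and \ref{lemhoriz3paths}. That list is, in fact, the \emph{entirety} of the paper's proof. The two further items you single out are genuine requirements for a Gray functor that the paper passes over in silence: the source/target conditions (your Stokes-plus-twisted-cochain boundary computations, which are correct --- for a good 3-path the $s$- and $t$-faces of the cube contribute nothing, so only the $r$-faces survive), and the preservation of the Gray interchanger.

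On the interchanger, your analysis of the target side is right: the two whiskered composites differ by $M_1\, d_\Omega M_2 - (d_\Omega M_1)\, M_2 = d_\Omega(-M_1M_2)$, so the interchanger of $\mathcal{C}_3(C)$ is represented by $-\langle T_\omega,h_1\rangle\langle T_\omega,h_2\rangle$; and you are also right that Lemma \ref{lemhoriz2paths}, which only gives strict preservation of the two horizontal composites, cannot settle the question, since a 3-morphism is not determined by its source and target. But your proposal stops exactly there: the required identity $[\langle T_\omega, J\rangle] = [-\langle T_\omega,h_1\rangle\langle T_\omega,h_2\rangle]$, where $J$ is the good 3-path realizing the interchanger 3-track of $\mathcal{P}_3(M)$ in the sense of Faria Martins--Picken, is announced as ``the hard part'' and never carried out. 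Taken as a proof, this is the one genuine gap, and it sits at the only step that is truly 3-categorical rather than a formal consequence of the lemmas; to close it you would need to integrate $T_\omega$ over the explicit interchanger 3-path (a Fubini/Chen-type computation over the region where $h_1$ and $h_2$ slide past one another, in the spirit of \cite[Proposition 2.1.2]{chen1973}), or cite a source that does so. To be fair, the paper's proof has the same gap and a larger one besides --- it never mentions the interchanger or the source/target conditions at all, deducing the theorem from the composition lemmas alone --- so your outline is a more complete verification plan than the paper's argument, while still falling short of a full check of the Gray functor axioms.
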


\section{Application: a higher holonomy functor for configuration spaces}\label{application}

\noindent In this section, we apply the construction of Section \ref{higherholonomyfunctors} to a formal power series connection defined by Komendarczyk, Koytcheff and Voli\'c \cite{KKV} on the configuration space of $m$ points in $\R^n$ for $n\geqslant 4$.  We describe the resulting holonomy 3-functor.

\subsection{The formal power series connection}

Let $\bar{\mathcal{D}}(m)$ denote Kontsevich's differential graded algebra of oriented diagrams for some integers $n\geqslant 4$ and $m\geqslant 1$ as defined in \cite{lambrechtsvolic, KKV}.  Let $I$ denote the formality integration map, and let $\mathcal{B}(m)$ denote a basis of $\bar{\mathcal{D}}(m)$ (as a vector space) given by choosing a diagram in each orientation class.  Since $n\geqslant 4$, each grading vector space of $\bar{\mathcal{D}}(m)$ is finite dimensional, therefore the graded dual $\bar{\mathcal{D}}(m)^*$ is a differential graded coalgebra.  We recall the formal power series connection introduced in \cite{KKV}:
\begin{equation*}
\omega = \sum_{\Gamma \in \mathcal{B}(m)}I(\Gamma)\otimes \frac{[\Gamma^*]}{\lvert \mathrm{Aut}(\Gamma)\rvert} \in C_{dR}^*(\mathrm{Conf}(m,\R^n)) \otimes \Omega(\bar{\mathcal{D}}(m)^*).
\end{equation*}
\begin{rem*} Notice that if we allowed $n = 3$,  then the above sum would contain infinitely many differential~1-forms and therefore would not define a formal power series connection.  Requiring that $n\geqslant 4$ ensures that~$\omega$ only contains finitely many terms in each degree and no terms of degree 1.  Moreover, since~$\mathrm{Conf}(m,\R^n)$ is finite dimensional,  the above sum is finite. 
\end{rem*}

The transport of $\omega$ is given by:
$$
T_\omega = \sum\limits_{k\geqslant 0} \sum\limits_{\Gamma_1, \,\ldots \Gamma_k \in \mathcal{B}(m)}\int I(\Gamma_1)\ldots I(\Gamma_k) \otimes \frac{[\Gamma_1^*\vert \cdots \vert \Gamma_k^*]}{\lvert \mathrm{Aut}(\Gamma_1)\rvert \cdots \lvert \mathrm{Aut}(\Gamma_k)\rvert} \in B^*(\mathrm{Conf}(m,\R^n))\otimes \Omega(\bar{\mathcal{D}}(m)^*).
$$

\subsection{The holonomy 3-functor}
Since $\bar{\mathcal{D}}(m)^*$ is 1-connected, the Gray 3-category $\mathcal{C}_3(\bar{\mathcal{D}}(m)^*)$ admits a simpler description:
\begin{itemize}
\item the 1-morphisms of $\mathcal{C}_3(\bar{\mathcal{D}}(m)^*)$ are given by the set $\R$, with composition given by multiplication,
\item the 2-morphisms of $\mathcal{C}_3(\bar{\mathcal{D}}(m)^*)$ are given by linear combinations of diagrams of degree 2, with the vertical composition and the horizontal compositions all given by addition,
\item for any 2-morphisms $X$ and $Y$ from $x$ to $y$, the set of 3-morphisms from $X$ to $Y$ is given by
\begin{equation*}
\mathcal{C}_3(\bar{\mathcal{D}}(m)^*)(X,Y) = \{\alpha \in \Omega^2(\mathcal{D}(m)^*)/d_\Omega \Omega^{1,2}(\bar{\mathcal{D}}(m)^*) \mid d_\Omega(\alpha) = Y - X\},
\end{equation*}
with the upward composition, the vertical composition and the horizontal compositions of 3-morphisms all given by addition. 
\end{itemize}

Theorem \ref{thm3holonomy} induces:
\begin{cor}
Let $n\geqslant 4$ and $m\geqslant 1$. The transport of $\omega$ induces a holonomy 3-functor:
\begin{equation*}
\mathrm{Hol}^3_\omega\colon \mathcal{P}_3(\mathrm{Conf}(m,\R^n)) \to \mathcal{C}_3(\bar{\mathcal{D}}(m)^*).
\end{equation*}
\end{cor}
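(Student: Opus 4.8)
The plan is to deduce the corollary directly from Theorem \ref{thm3holonomy}, so that the only real work is to check that the element $\omega$ displayed above is genuinely a formal power series connection on $\mathrm{Conf}(m,\R^n)$ with values in the cobar complex $\Omega(\bar{\mathcal{D}}(m)^*)$, in the sense of Section \ref{formalpowerseriesconnections}. Once this is done, taking $C = \bar{\mathcal{D}}(m)^*$ in Theorem \ref{thm3holonomy} immediately produces the Gray functor $\mathrm{Hol}^3_\omega$, and the simplified description of $\mathcal{C}_3(\bar{\mathcal{D}}(m)^*)$ recorded above then identifies the target. Here one first records that $C = \bar{\mathcal{D}}(m)^*$ is indeed a coaugmented differential graded coalgebra: finite-dimensionality of each graded piece of $\bar{\mathcal{D}}(m)$ for $n\geqslant 4$ makes the graded dual a differential graded coalgebra, and the unit of $\bar{\mathcal{D}}(m)$ dualizes to the required coaugmentation.

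There are then three conditions to verify. First, $\omega$ must lie in the completed tensor product $C_{dR}^*(\mathrm{Conf}(m,\R^n))\,\widehat{\otimes}\,\Omega(\bar{\mathcal{D}}(m)^*)$; by the remark preceding the corollary, finite-dimensionality together with $n\geqslant 4$ guarantees that the defining sum is finite and contains no $1$-forms, so every form occurring has degree at least $2$ and membership is automatic (this is also what places $T_\omega$ in $B^*(\mathrm{Conf}(m,\R^n))\,\widehat{\otimes}\,\Omega(\bar{\mathcal{D}}(m)^*)$, as noted in Section \ref{transport}). Second, I would check the degree condition $\deg I(\Gamma) = \deg\!\big[\Gamma^*\big]$ for each $\Gamma\in\mathcal{B}(m)$: the bracket $[\Gamma^*] = s^{-1}\Gamma^*$ sits in degree $\deg(\Gamma)-1$ after desuspension, and the grading conventions of the Kontsevich--Lambrechts--Voli\'c integration map $I$ are arranged in \cite{KKV, lambrechtsvolic} so that $I(\Gamma)$ is a de Rham form of exactly this degree; this is part of the setup and requires only bookkeeping.

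The substantive step is the twisted cochain condition $d_\Omega\omega + d\omega = \varepsilon(\omega)\wedge\omega$, and this is where I expect the main obstacle to lie. The term $d\omega$ records the de Rham differentials $dI(\Gamma)$, the term $d_\Omega\omega$ records the coalgebra differential and reduced coproduct of $\bar{\mathcal{D}}(m)^*$ (equivalently, the differential and product of the diagram algebra $\bar{\mathcal{D}}(m)$), and the quadratic right-hand side records the wedge products $I(\Gamma_1)\wedge I(\Gamma_2)$. Dualizing, this identity is the assertion that the forms $I(\Gamma)$ are compatible with the differentials, coproducts and products simultaneously, that is, that $I$ realizes the formality of $\mathrm{Conf}(m,\R^n)$ as a morphism of differential graded algebras, as established in \cite{KKV}. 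The work I would carry out is therefore to match conventions: to unwind the definition of $d_\Omega$ on $\Omega(\bar{\mathcal{D}}(m)^*)$ against the dualized differential and coproduct of $\bar{\mathcal{D}}(m)$, to account for the automorphism factors $\lvert\mathrm{Aut}(\Gamma)\rvert$ and for the sign operator $\varepsilon$, and to verify that the desuspension signs reproduce exactly the signs appearing in the twisted cochain equation. Granting these identifications, the formality theorem of \cite{KKV} supplies the required equation, the three conditions are met, and Theorem \ref{thm3holonomy} yields $\mathrm{Hol}^3_\omega$.
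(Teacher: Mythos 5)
Your proposal is correct and follows the same route as the paper: the corollary is an immediate application of Theorem \ref{thm3holonomy} with $C = \bar{\mathcal{D}}(m)^*$ and $M = \mathrm{Conf}(m,\R^n)$, where the fact that $\omega$ is a formal power series connection (including the twisted cochain condition, which encodes the formality properties of the integration map $I$) is imported from \cite{KKV}, and the role of $n\geqslant 4$ is exactly the finiteness and absence of degree-1 terms you identify. The paper simply states this deduction without spelling out the verification you outline, so your write-up is a more detailed version of the same argument.
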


\bibliographystyle{alpha}
\bibliography{Bibliography.bib}

\begin{thebibliography}{KKV24}

\bibitem[BS04]{baez2004higher}
John Baez and Urs Schreiber.
\newblock Higher gauge theory: 2-connections on 2-bundles.
\newblock {\em arXiv preprint hep-th/0412325}, 2004.

\bibitem[CG02]{cohen2002loop}
Frederick~R. Cohen and Samuel Gitler.
\newblock On loop spaces of configuration spaces.
\newblock {\em Transactions of the American Mathematical Society},
  354(5):1705--1748, 2002.

\bibitem[Che73]{chen1973}
Kuo-Tsai Chen.
\newblock Iterated integrals of differential forms and loop space homology.
\newblock {\em Annals of Mathematics}, 97(2):217--246, 1973.

\bibitem[Che77]{chen1977}
Kuo-Tsai Chen.
\newblock Iterated path integrals.
\newblock {\em Bulletin of the American Mathematical Society}, 83(5):831--879,
  1977.

\bibitem[CS98]{carter_knotted_1998}
J~Scott Carter and Masahico Saito.
\newblock {\em Knotted surfaces and their diagrams}, volume~55.
\newblock 1998.

\bibitem[Dam17]{damiani_journey_2017}
Celeste Damiani.
\newblock A journey through loop braid groups.
\newblock {\em Expositiones Mathematicae}, 35(3):252--285, September 2017.

\bibitem[FMP10]{martins2010two}
Jo{\~a}o Faria~Martins and Roger Picken.
\newblock On two-dimensional holonomy.
\newblock {\em Transactions of the American Mathematical Society},
  362(11):5657--5695, 2010.

\bibitem[FMP11]{faria_martins_fundamental_2011}
João Faria~Martins and Roger Picken.
\newblock The fundamental {Gray} 3-groupoid of a smooth manifold and local
  3-dimensional holonomy based on a 2-crossed module.
\newblock {\em Differential Geometry and its Applications}, 29(2):179--206,
  March 2011.

\bibitem[Kam02]{kamada_braid_2002}
Seiichi Kamada.
\newblock Braid and {Knot} {Theory} in {Dimension} {Four}.
\newblock volume~95 of {\em Mathematical {Surveys} and {Monographs}},
  Providence, Rhode Island, May 2002. American Mathematical Society.

\bibitem[KKV24]{KKV}
Rafal Komendarczyk, Robin Koytcheff, and Ismar Volić.
\newblock Diagrams for primitive cycles in spaces of pure braids and string
  links.
\newblock {\em Annales de l'Institut Fourier}, 74(4):1745--1807, 2024.

\bibitem[Koh16]{kohno2016higher}
Toshitake Kohno.
\newblock Higher holonomy of formal homology connections and braid cobordisms.
\newblock {\em Journal of Knot Theory and Its Ramifications}, 25(12):1642007,
  2016.

\bibitem[Koh20]{kohno2020higher}
Toshitake Kohno.
\newblock Higher holonomy maps for hyperplane arrangements.
\newblock {\em European Journal of Mathematics}, 6(3):905--927, 2020.

\bibitem[Koh21]{kohno2021higher}
Toshitake Kohno.
\newblock Higher holonomy and iterated integrals.
\newblock In A.~Papadopoulos, editor, {\em Topology and Geometry, A collection
  of papers dedicated to Vladimir G. Turaev}, pages 307--325. European
  Mathematical Society Press, 2021.

\bibitem[Koh22]{Koh2022}
Toshitake Kohno.
\newblock Formal connections, higher holonomy functors and iterated integrals.
\newblock {\em Topology and its Applications}, 313:107985, 2022.

\bibitem[LV14]{lambrechtsvolic}
Pascal Lambrechts and Ismar Voli{\'c}.
\newblock Formality of the little $ n $-disks operad.
\newblock {\em Memoirs of the American Mathematical Society}, 230(1079), 2014.

\bibitem[Oht02]{ohtsuki2002quantum}
Tomotada Ohtsuki.
\newblock {\em Quantum invariants: A study of knots, 3-manifolds, and their
  sets}, volume~29.
\newblock World Scientific, 2002.

\bibitem[SW07]{schreiber_parallel_2007}
Urs Schreiber and Konrad Waldorf.
\newblock Parallel transport and functors.
\newblock {\em J. Homotopy Relat. Struct.}, 4, 05 2007.

\bibitem[SW08]{schreiber_smooth_2008}
Urs Schreiber and Konrad Waldorf.
\newblock Smooth functors vs. differential forms.
\newblock {\em Homology, Homotopy Appl.}, 13, 02 2008.

\end{thebibliography}

\end{document}